\documentclass[10pt]{amsart}
\usepackage{amsmath,amsthm,amscd,amsfonts,amssymb,graphicx,paralist,color}
\usepackage{enumitem}
\usepackage[initials]{amsrefs}
\usepackage{xcolor}
\usepackage{hyperref}
\usepackage{rotating}
\hypersetup{colorlinks=false,linkbordercolor=red,linkcolor=green,pdfborderstyle={/S/U/W 1}}
\usepackage[skip=0.7\baselineskip]{caption}
\usepackage[utf8]{inputenc}
\usepackage[T1]{fontenc}
\usepackage{tikz}
\usetikzlibrary{trees}
\usepackage{enumerate}
\usepackage{geometry}
\theoremstyle{plain} 
\newtheorem{theorem}{Theorem}[section]
\newtheorem{lemma}[theorem]{Lemma} 
\newtheorem{corollary}[theorem]{Corollary} 
\newtheorem{proposition}[theorem]{Proposition}

\theoremstyle{definition} 
\newtheorem{remark}[theorem]{Remark}
\newtheorem{question}[theorem]{Question}

\theoremstyle{definition}

\newcommand{\charf}[1]{\mbox{\raise.48ex\hbox{$\chi$}$_{#1}$}}

\newcommand{\R}{\mathbb{R}}

\newcommand{\bN}{\mathbb{N}}

\DeclareMathOperator{\diam}{diam}

\newcommand{\BE}{\begin{equation}}
\newcommand{\EE}{\end{equation}}

\makeatletter
\@namedef{subjclassname@1991}{2020 Mathematics Subject Classification}
\makeatother

\title[Dimensions, Hölder regularity, and fractional differentiability]{
	Algebraic structures featuring graph dimensions, Hölder regularity, and fractional differentiability
}

\author[Esser]{C. Esser}
\address[C\'eline Esser]{\mbox{}\newline\indent Universit\'e de Li\`ege, \newline \indent D\'epartement de Math\'ematique, \newline \indent Quartier Polytech 1, All\'ee de la D\'ecouverte 12, B\^atiment B37, \newline \indent B-4000 Li\`ege, Belgique.}
\email{celine.esser@uliege.be}

\author[Maghsoudi]{S. Maghsoudi}
\address[S. Maghsoudi]{\mbox{}\newline \indent Department of Mathematics, \newline \indent  University of Zanjan, \newline \indent  Zanjan 45371-38791, Iran.}
\email{s\_maghsodi@znu.ac.ir}

\author[Rodr\'{i}guez]{D. L.~Rodr\'{i}guez--Vidanes}
\address[D. L.~Rodr\'{i}guez-Vidanes]{\mbox{}\newline\indent Grupo de Investigación de Análisis Matemático y Aplicaciones (AMA), \newline \indent Departamento de Matem\'{a}tica Aplicada a la Ingeniería Industrial, \newline \indent E.T.S.I.D.I.,  \newline \indent Ronda de Valencia 3, \newline \indent Universidad Politécnica de Madrid, \newline \indent	Madrid, 28012, Spain.}
\email{dl.rodriguez.vidanes@upm.es}

\author[Seoane]{J. B. Seoane--Sep\'ulveda}
\address[J. B. Seoane--Sep\'ulveda]{\mbox{}\newline\indent Instituto de Matem\'atica Interdisciplinar (IMI), \newline\indent Departamento de An\'{a}lisis Matem\'{a}tico y Matem\'atica Aplicada,\newline\indent Facultad de Ciencias Matem\'aticas,  \newline\indent Plaza de Ciencias 3, \newline\indent Universidad Complutense de Madrid, \newline\indent 28040 Madrid, Spain.}
\email{jseoane@ucm.es}

\begin{document}
	
	\subjclass{15A03; 46B87; 26A16; 26A27; 28A78}
	
	\keywords{lineability, algebrability, fractional differentiability, box dimension,  Hausdorff dimension, Hölder regularity,   continuous functions}
	\thanks{}

	\begin{abstract}
		We investigate the algebraic genericity of various families of continuous functions exhibiting extreme irregularity, focusing on fractal dimensions, Hölder regularity, and fractional differentiability. 
		Our first main result shows that for every $s \in (1,2]$, the set of continuous functions on $[0,1]$ whose graph has Hausdorff and box dimensions equal to $s$ is strongly $\mathfrak{c}$-algebrable, thereby tackling an open question from \cite{BMPS} by Bonilla et al., and complementing recent findings by Liu et. al \cite{LiZhSh} and Carmona et al. \cite{CFST22}. 
		We then extend the analysis to Hölder spaces: although the pointwise Hölder exponent of a generic function in $C^\alpha[0,1]$ is constant, we prove that the collection of functions realizing this behavior is $\mathfrak{c}$-lineable but cannot form an algebra. 
		Nevertheless, we construct strongly $\mathfrak{c}$-algebrable families of functions that exhibit Hölder exponent $\alpha$ outside a set of Hausdorff dimension zero.
		Finally, as a consequence of the relation between strongly monoH\"older functions and fractional differentiability, we analyze the strong $\mathfrak c$-algebrability of nowhere (Riemann-Liouville) fractional differentiable functions.
	\end{abstract}
		\maketitle

	\section{Introduction and preliminaries}
Weierstrass'  function is defined by
		$$
		W_{a,b}(x) = \sum_{k=0}^{\infty} a^k \cos (b^k 2 \pi x) ,
		$$
	with $0 < a < 1$ and  $ab \geq 1 $, and it is a classical example in real analysis of a function that is continuous everywhere but differentiable nowhere. 
	Besides, the graph of $W_{a,b}$ and some of its variations have been studied from a geometric point of view as fractal curves in the
	plane, see e.g. \cite{B12,BBR,H03,HL,H98,PU,MW86} and
	references therein. 
	For notation purposes, we will not distinguish between a function $f$ and its graph throughout this paper.
	While the box dimension of the graph of a function can be
	easily computed thanks to its H\"older regularity \cite{Falconer:97},
	the problem of the computation of the Hausdorff dimension is more
	complicated to tackle, and it is only  a few years ago that the
	conjecture related to the Hausdorff dimension of the graph of the
	Weierstrass' function has been solved \cite{keller,shen}. 	In particular,
		\begin{equation}\label{HausBox}
			\dim_{\mathcal{H}}(W_{a,b})  = \dim_{B}(W_{a,b})  = 2+\frac{\log(a)}{\log (b)},
		\end{equation}
	where $\dim_{\mathcal{H}}(\cdot)$ and $\dim_{B}(\cdot)$ denote the Hausdorff and box dimension, respectively.
	
	It may be considered as another shock that these pathological properties are not rare from several mathematical points of view.
	For instance, the residuality of the set of nowhere differentiable functions in the space of continuous function has been obtained in 1931 as a nice application of the Baire category theorem by Banach and Mazurkiewicz independently. 
	In 1994, Hunt \cite{H94} extended the latter result to the generic setting of prevalence, a concept introduced in order to generalize the notion of Lebesgue almost everywhere to infinite dimensional spaces \cite{Christensen74,HSY92}. 
	Let us remark that researches related to the residuality and prevalence of the set of	functions having a graph with a prescribed box or Hausdorff dimension
	have also been undertaken, see e.g. \cite{BH13,CN10,GJMNOP12,HLOP12}.

	Another notion that can show a pathological property is not algebraically rare is lineability. 
	The  theory of lineability has provided a number of concepts in order to quantify the existence of linear or algebraic structures inside a--not necessarily linear--set. 	 The term lineability was coined by V. I. Gurariy \cite{AGS} and is introduced in the Ph.D. Dissertation of the fourth author \cite{Seo}.
	It is a well-established line of research in mathematics now.	For a deeper understanding, we refer the interested reader to \cite{ar}. It is important to mention that, although lineability was first established by Gurariy, lineability results can be traced back to a paper of 1941 by Levin and Milman \cite{LeMi}. The algebraic structure of the set of nowhere differentiable functions in the space of continuous function  has also been deeply investigated using  this new  notion, see e.g. \cite{BQ,FGK99,Gurariy:66,jms}.
	Also, very recently several authors have analyzed the existence of large spaces with additional properties within the set of continuous nowhere differentiable functions \cite{FaPeRaRi,ArBaRaRi}.
	
	Working in this framework, Bonilla, Mu\~{n}oz-Fern\'{a}ndez, Prado-Bassas and the fourth author in \cite{BMPS} showed that for a given $s\in (1,2]$, the set of continuous functions whose graph has Hausdorff and box dimension equal to $s$ is $\mathfrak{c}$-lineable, where $\mathfrak{c}$ denotes the continuum, and the existing subspace can be chosen to be dense in $C[0,1]$, the Banach space of continuous functions on $[0,1]$ with the supreum norm (for definitions see Section \ref{prim}).
	They raised the following two questions (\cite[Questions~2.11 and~3.4]{BMPS}, respectively) of knowing whether besides  asking for vector subspaces, one could also study other structures, such as algebras. 
	
	\begin{question}\label{question1}
		Given $s \in (1, 2]$, is it possible to obtain the algebrability of the set of functions $f \in C[0,1]$ with $\dim_{\mathcal{H}}(f)=s$?
	\end{question}

	\begin{question}\label{question2}
		Given $s\in (1,2]$, is it possible to obtain maximal-dense-lineability (or algebrability) of the set of functions $f\in C[0,1]$ whose graph have box dimension $s$ everywhere in $[0, 1]$?
	\end{question}

	In \cite[Theorem~2.7]{LiZhSh}, Liu et. al answered Question~\ref{question2} in the affirmative.
	Observe that in terms of algebrability, Question~\ref{question1} is very general and does not require any specific conditions on the algebra.
	In this regard, Liu et. al in \cite[Theorem~2.7]{LiZhSh} showed that the set of functions $f \in C[0,1]$ with $\dim_{\mathcal{H}}(f)=s\in (1,2]$ is maximal-dense-lineable and also dense-algebrable.

	In this paper, after providing the necessary notions and some primary results in Section~\ref{prim},  we  obtain an affirmative answer to Question~\ref{question1} by showing that the the set of functions $f \in C[0,1]$ with $\dim_{\mathcal{H}}(f)=s\in (1,2]$ is strongly $\mathfrak{c}$-algebrable (this is an immediate consequence of Theorem \ref{thm:alggraph} in Section~\ref{sec:algebrability}).
	In Section~\ref{sec:lineab}, we analyze these types of problems in the context of Hölder spaces, which complement the findings of Liu et. al in \cite[Section~3]{LiZhSh}.
	In fact, we recall classical results concerning the H\"older pointwise regularity of a generic function in a given H\"older space, and then prove that the same holds with the notion of  lineability.
	We show that this generic behavior cannot hold on an algebra. 
	Nonetheless, we obtain a positive (but weaker) result if one allows the H\"older exponent to differ on a set of Hausdorff  dimension $0$. 
	The results of this section  are obtained using wavelets and Schauder bases, and in particular the characterization of  the H\"older regularity using these bases.
	As a consequence of the previous results and the relation established between notions of H\"older regularity and fractional differentiability, we study the strong $\mathfrak c$-algebrability of the family of nowhere fractional differentiable functions. Finally, we shall also complement and improve the results from \cite{CFST22} by addressing properties, such as fractal dimension, pointwise regularity, or fractional differentiability from the algebrability viewpoint.

\section{Preliminaries}\label{prim}
	
	In this section, we gather the definitions for the key notions of box and Hausdorff dimensions, various H\"older regularity conditions, fractional differentiability, lineability, and algebrability, besides some known results useful for the sequel.
	We shall use standard set-theoretical notation. As usual,  $\mathbb{N}$,  $\mathbb{Z}$, $\mathbb{Q}$ and $\mathbb{C}$ denote the sets of all  natural,  integer, rational and complex numbers, respectively. 
	Also, $\mathbb N_0$ denotes $\mathbb N\cup \{0\}$; and $\aleph_0$ and $\mathfrak c$ denote the cardinality of $\mathbb{N}$ and $\mathbb R$, respectively. 
	Given a function $f: A \to \mathbb R$, where $A$ is a nonempty subset of $\mathbb R$, and $B\subseteq A$, we denote the restriction of $f$ to $B$ by $f|_B$.

	\subsection{Hausdorff and box dimensions}
	We use    two different notions of dimension.  For more information, we refer the reader to \cite{Falconer:90,Falconer:97}. 
	
	\medskip
	
	Let $E \subseteq \R^d$ be bounded. For any $\delta >0$, we denote by $N_\delta (E)$ the minimal number of cubes of size $\delta$ needed to cover $E$. 
	If it exists, the \emph{box dimension} of $E$ is given by the limit
		$$
		\dim_B(E) := \lim_{ \delta \to 0^{+}} \frac{\log N_\delta (E)}{- \log \delta}.
		$$
	If the limit does not exist, one considers instead the upper and lower
	box dimensions, denoted respectively by $\overline{\dim}_B(E)$ and $\underline{\dim}_B(E)$, and defined using upper and lower limits.
	 
	 \medskip
	
	The notion of dimension which is mainly used in multifractal analysis is the Hausdorff dimension. 
	 
	Let $E \subseteq \R^d$ and $\delta >0$. If $E \subseteq \bigcup_{i \in  \mathbb N} E_i$ with $0 \leq \diam (E_i) \leq \delta$ for every $i \in \mathbb N$, we say that $(E_i)_{i \in \mathbb N}$ is a \textit{countable $\delta$-covering} of $E$.  For every $r  \geq 0$ and $\delta>0$, one sets
		\[
		\mathcal{H}^{r}_{\delta} (E) :=  \inf \left\{ \sum_{i=1}^\infty \diam(E_i)^{r} : (E_i)_{i \in \mathbb N} \mbox{ countable $\delta$-covering of } E \right\},
		\]
	where $\diam(\cdot)$ denotes the diameter of a set.
	Since $\mathcal{H}^{r}_{\delta}(E)$ is a decreasing function with respect to $\delta$, one can define the \emph{$r$-dimensional Hausdorff outer measure} of $E$ by setting
		\[
		\mathcal{H}^{r} (E) := \sup_{\delta > 0} \mathcal{H}^{r}_{\delta} (E) = \lim_{\delta \rightarrow 0^+} \mathcal{H}^{r}_{\delta} (E).
		\]
	The \emph{Hausdorff dimension} of $E$ is the unique value $\dim_{\mathcal{H}}(E)$ such that
	\[
	\mathcal{H}^{r} (E) = \left\{
	\begin{array}{ll}
		+ \infty & \mbox{ if } r < \dim_{\mathcal{H}}(E), \\
		0 & \mbox{ if } r > \dim_{\mathcal{H}}(E).
	\end{array}
	\right.
	\]
	Moreover, we use the convention that $\dim_{\mathcal{H}}(\varnothing) = - \infty$. 
	For any non-empty bounded set $E \subseteq \R^d$, one  has
	\BE \label{eq:reldim}
	0 \leq \dim_{\mathcal{H}} (E) \leq \underline{\dim}_B(E) \leq \overline{\dim}_B(E) \leq d.
	\EE
	Note that any function $f$ is contained in $\R^2$ and, therefore, $d=2$ in the latter chain of inequalities.
 
 	\medskip
	
	Let us present the link between the Hölder regularity of a function and the
	box dimension of its graph, see e.g. \cite{Falconer:97}. 
	First, let us recall that given $\alpha \geq 0$ and $a<b$,  a function $f: [a,b] \to \R$ belongs to the
	\emph{H\"older space} $C^{\alpha}[a,b]$ if there exists a constant
	$C>0$ such that
	$$
	|f(x) - f(y)| \leq C |x-y|^{\alpha}
	$$
	for every $x,y \in [a,b]$. 
	In this case, we say that $f$ is \textit{Hölder continuous of exponent $\alpha$}, or simply that $f$ has \textit{Hölder regularity $\alpha$}.
	Moreover, a function $f$ is \textit{Hölder regular} if there is an $\alpha \geq 0$ such that $f$ has Hölder regularity $\alpha$.
	Classically, the H\"older space $C^{\alpha}[a,b]$
	is endowed by the norm
	$$
	\|f\|_{C^{\alpha}[a,b]} := \|f\|_{L^{\infty}[a,b]} + \sup_{x,y \in
		[a,b]} \frac{|f(x)-f(y)|}{|x-y|^{\alpha}}.  
	$$ 
	If $\alpha =1$, we rather say that $f$ is Lipschitz.
		
	Let us recall a known result about the box dimension of functions in  H\"older spaces.
	 
	\begin{proposition}\cite{Falconer:97}.\label{prop_dimgraph}
		Let $a<b$ be two real numbers and let $f: [a,b] \to \R$ be a continuous function.
		\begin{itemize}
			\item[\rm{(i)}] If there is an $\alpha \in [0,1]$ such that $f \in
			C^{\alpha}[a,b]$, then $ \overline{\dim}_B (f) \leq  2- \alpha.$
			\item[\rm{(ii)}] If there are $\alpha \in [0,1]$, $\delta_0>0$ and $c>0$ such that for every $x \in [a,b]$ and every $\delta \in (0,\delta_0]$, there is a $y \in [a,b]$ with $|x-y| \leq \delta$ and
			$$
			\big| f(x) - f(y)\big| \geq c \, \delta^{\alpha}, 
			$$
		then $\underline{\dim}_B (f)\geq 2-\alpha$.
		\end{itemize}
	\end{proposition}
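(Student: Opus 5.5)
Both parts will be proved by a direct count of the $\delta$-mesh squares that meet the graph. The bridge between that count and the oscillation of $f$ is the following elementary estimate. Fix $\delta\in(0,1)$ and let $m=\lceil (b-a)/\delta\rceil$. Split $[a,b]$ into the $m$ consecutive intervals $I_1,\dots,I_m$ of length at most $\delta$. For each $i$ write
\[
R_f(I_i):=\sup_{x,y\in I_i}|f(x)-f(y)|.
\]
The piece of the graph above $I_i$ is connected because $f$ is continuous. The number of $\delta$-mesh squares in the column over $I_i$ that it meets therefore lies between $R_f(I_i)/\delta$ and $2+R_f(I_i)/\delta$. Let $N'_\delta$ be the number of $\delta$-mesh squares meeting the graph. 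Summing over $i$ gives
\[
\sum_{i=1}^m \frac{R_f(I_i)}{\delta}\;\le\; N'_\delta(f)\;\le\; 2m+\sum_{i=1}^m\frac{R_f(I_i)}{\delta}.
\]
Finally, $N'_\delta$ and $N_\delta$ (minimal covers by cubes of side $\delta$) are comparable up to multiplicative constants. Hence either count may be used in the limits defining $\overline{\dim}_B$ and $\underline{\dim}_B$.

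For (i), assume $f\in C^\alpha[a,b]$ with constant $C$. Each interval has length at most $\delta$, so $R_f(I_i)\le C\delta^\alpha$. Plugging this into the upper bound gives
\[
N'_\delta(f)\le 2m+mC\delta^{\alpha-1}\lesssim \delta^{-1}+\delta^{\alpha-2}\lesssim \delta^{\alpha-2},
\]
where the last step uses $\alpha\le 1$ and $\delta<1$. Taking $\log N'_\delta/(-\log\delta)$ and passing to the $\limsup$ yields $\overline{\dim}_B(f)\le 2-\alpha$.

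For (ii), take $\delta\le\delta_0$ and an interval $I_i$, and let $x$ be its midpoint. The hypothesis cannot be applied with radius $\delta$ directly: the point $y$ it produces might leave $I_i$. Instead apply it with radius $\delta/2$. This gives $y$ with $|x-y|\le\delta/2$ and $|f(x)-f(y)|\ge c(\delta/2)^\alpha$. Since $|x-y|\le\delta/2$, the point $y$ lies in $I_i$ whenever $I_i$ has full length $\delta$. All intervals except possibly the last have full length, so this covers all but at most one index. The last interval can be handled by discarding it, or by choosing the partition so that every interval has length exactly $\delta$ up to a bounded loss; this bookkeeping is the main technical nuisance of the argument. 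It follows that $R_f(I_i)\ge c2^{-\alpha}\delta^\alpha$ for at least $m-1$ indices. With $m\gtrsim(b-a)/\delta$, the lower bound gives
\[
N'_\delta(f)\ge (m-1)\,c2^{-\alpha}\delta^{\alpha-1}\gtrsim \delta^{\alpha-2}.
\]
Taking the $\liminf$ of $\log N'_\delta/(-\log\delta)$ gives $\underline{\dim}_B(f)\ge 2-\alpha$.

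The only delicate points are the two just handled: keeping $y$ inside the correct column in (ii), and the comparability of $N'_\delta$ with $N_\delta$. The latter is standard: any set of diameter at most $\delta$ meets at most $9$ (in $\mathbb{R}^2$) mesh squares of side $\delta$.
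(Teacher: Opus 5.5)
Your proof is correct and is essentially the standard argument behind this result, which the paper does not prove but simply cites from Falconer: bound the number of $\delta$-mesh squares meeting the graph by the oscillations $R_f$ over the grid columns, then apply the two Hölder-type estimates. Your midpoint-and-$\delta/2$ device for keeping $y$ inside the right column is a clean fix for a point that textbook treatments gloss over. The only remaining bookkeeping is to take the $I_i$ to be the grid columns themselves (for instance after translating so that $a=0$), which in (ii) means discarding at most two boundary columns.
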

	
	Any function satisfying conditions (i) and (ii) of Proposition
	\ref{prop_dimgraph} is called \emph{strongly monoH\"older} on $[a,b]$
	with exponent $\alpha$.
	Note that the box dimension of any strongly monoH\"older function of exponent $\alpha$ exists and it is equal to $2-\alpha$.  
	However, it is not true in general that the Hausdorff dimension of its graph is $2-\alpha$ (see, e.g., \cite{MR2898734,BBR}).
	
	Of course, the graph of any Lipschitz function has Hausdorff and box dimensions equal to $1$. In fact, the addition of Lipschitz 
	functions does not modify the Hausdorff dimension of  graphs, as recalled in the next result.
	
	\begin{proposition}\cite{MW86}\label{prop:dimLip}
		Let $g$ be a Lipschitz %continuous 
		function on $[a,b]$. 
		Then, for every function $f: [a,b]\to \R$, we have
		$$
		\dim_{\mathcal{H}}(f+g) = \dim_{\mathcal{H}}(f). 
		$$
	\end{proposition}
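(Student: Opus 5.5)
The statement to prove is Proposition~\ref{prop:dimLip}: adding a Lipschitz function $g$ to an arbitrary $f$ does not change the Hausdorff dimension of the graph. The plan is to exhibit a bi-Lipschitz map of $\R^2$ that carries the graph of $f$ onto the graph of $f+g$, and then use the invariance of Hausdorff dimension under bi-Lipschitz maps.

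First, I define the shear
$$
\Phi:[a,b]\times\R\to[a,b]\times\R,\qquad \Phi(x,y)=(x,\,y+g(x)).
$$
Clearly $\Phi$ maps the graph of $f$ bijectively onto the graph of $f+g$. Its inverse is $\Phi^{-1}(x,y)=(x,\,y-g(x))$, which has the same form with $g$ replaced by $-g$.

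Next, I check that $\Phi$ is Lipschitz. Let $L$ be the Lipschitz constant of $g$. Then
$$
|\Phi(x,y)-\Phi(x',y')|\le |x-x'|+|y-y'|+L|x-x'|\le (2+L)\,|(x,y)-(x',y')|.
$$
The same bound holds for $\Phi^{-1}$, so $\Phi$ is bi-Lipschitz.

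Then I invoke the standard fact that a Lipschitz map with constant $K$ satisfies $\mathcal H^r(\Phi(E))\le K^r\mathcal H^r(E)$. This follows directly from the definition of $\mathcal H^r_\delta$ recalled above: a $\delta$-covering $(E_i)_{i\in\mathbb N}$ of $E$ is sent to the $K\delta$-covering $(\Phi(E_i))_{i\in\mathbb N}$ of $\Phi(E)$, and $\diam\Phi(E_i)\le K\diam E_i$. Consequently $\dim_{\mathcal H}\Phi(E)\le\dim_{\mathcal H}E$. Applying this to both $\Phi$ and $\Phi^{-1}$ gives equality.

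I do not expect any real obstacle. The only point requiring care is that $f$ is arbitrary, not even continuous or bounded, so the graph may be an unbounded set. This is harmless: the Hausdorff measure estimate uses only countable coverings and needs no boundedness. By contrast, the analogous statement for box dimension would need boundedness of the graph.
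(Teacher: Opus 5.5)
Your argument is correct. The shear $\Phi(x,y)=(x,y+g(x))$ is bi-Lipschitz with constant $2+L$ and maps the graph of $f$ onto the graph of $f+g$, and Lipschitz maps cannot raise Hausdorff dimension.

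The paper gives no proof of its own here; it simply quotes \cite{MW86}. Your argument is the standard one, and it uses the same bi-Lipschitz invariance that the paper invokes in the proof of Theorem~\ref{thm:alggraph}.

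One small detail to tidy: a covering set $E_i$ need not lie in $[a,b]\times\R$, so apply $\Phi$ to $E_i\cap E$. Alternatively, first extend $g$ to a Lipschitz function on $\R$, e.g.\ constant outside $[a,b]$.
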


	\medskip

	 The Hölder regularity of nowhere differentiable functions can also be studied locally by the notion of H\"older pointwise regularity.
	 While the regularity of the Weierstrass' function is the same at every point \cite{Hardy:16}, there exist functions whose regularity varies significantly from point to point, see among others \cite{Arneodo:98,Balanca:14,Jaffard:96,Jaffard:04}. 
	 For completeness, let us recall the notion of Hölder pointwise regularity. 
		 
	Let $x_0 \in \R$ and $\alpha \geq 0$.  
	A locally bounded function $f$ belongs to $C^\alpha(x_0)$  if there exist a constant $C>0$ depending on $x_0$ and a polynomial $P_{x_0}$ depending on $x_0$ of degree less than $[\alpha]$, where $[\cdot]$ denotes the integer part, such that 
		\begin{equation}\label{Hol}
			|f(x) - P_{x_0} (x) | \leq C |x-x_0|^\alpha
		\end{equation}
	for every $x$ in a neighborhood of $x_0$.  
	The {\sl H\"older exponent} $h_f(x_0)$ of $f$ at $x_0$ is defined as its maximal Hölder regularity at $x_0$, i.e.,
		\[ 
		h_f(x_0) := \sup \big\{ \alpha \geq 0 : f \in C^\alpha(x_0) \big\},
		\]
	possibly equal to $+ \infty$. 
	The notion of $h_f(x_0)$ allows to quantify the local smoothness of $f$ at $x_0$.
	Observe that when $h_f(x_0) \leq 1$ (which is the case if $f$ is not differentiable at $x_0$), the H\"older exponent  is simply given by the formula
		\[
		h_f(x_0) = \liminf_{x \to x_0} \frac{\log |f(x) - f(x_0)|}{\log |x-x_0|} .
		\]
	While the pointwise regularity is fully encapsulated by the H\"older exponent, the global H\"older regularity can be characterized by H\"older spaces.
	Also, note that $f \in C^{\alpha}[a,b]$, with $a<b$ and $\alpha \geq 0$, if \eqref{Hol} holds for every $x_0 \in [a,b]$ and with the constant $C$ being uniform. 
	
	\medskip
		
	Now, let us recall some basic notions from fractional calculus found in \cite{kol}.
	
	\subsection{Riemann-Liouville fractional derivative}\label{fracder}
	
	Let $f:I \rightarrow \mathbb{R}$, with $I\subseteq \mathbb R$ an open interval, and $0<q<1$ be a number. The \textit{Riemann-Liouville fractional derivative of order $q$} of $f$ at $a\in I$ is defined by
		\[
		\frac{d^q f(x)}{[d(x-a)]^q} := \frac{1}{\Gamma (1-q)} \cdot \frac{d}{dx}\int_a^x \frac{f(y)}{(x-y)^q}dy,
		\]
	where $\Gamma$ denotes the Gamma function.
	Following \cite{kol}, we define the \textit{local fractional derivative of order $q$} of the function $f$ at $a$  by
			\[
			\mathbb{D}^qf(a) := \lim_{x\to a}\frac{d^q(f(x)-f(a))}{[d(x-a)]^q}
			\]
	provided the limit exists and is finite (in which case we say that $f$ is \textit{locally fractional differentiable of order $q$} at $a$).
	We say that that a function $f$ from $I$ to $\R$ is \textit{fractional differentiable of order $q$} if it is locally fractional differentiable of order $q$ at every $a\in I$.
	Also, we will say that $f$ is \textit{nowhere fractional differentiable of order $q$} if it is not locally fractional differentiable of order $q$ at any $a\in I$.
	
	In \cite{kol}, Kolwankar and Gangal established a relation between condition (ii) of Proposition~\ref{prop_dimgraph} and the fractional nowhere differentiability of functions.
	To be more precise, they show that if a function $f: [a,b] \to \R$ satisfies condition (ii) of Proposition
	\ref{prop_dimgraph} for some $\alpha \in (0,1)$, then $f$ is nowhere fractional differentiable on $(a,b)$ for any order $q\in (\alpha,1)$.
	
	Regarding the fractional differentiability of order $q<\alpha$, Kolwankar and Gangal also proved in \cite{kol} that if a function $f\in C^{\alpha}[a,b]$ for some $\alpha \in (0,1)$, then $f$ is fractional differentiable on $(a,b)$ for any order $q\in (0,\alpha)$.
	
	Thus, as a consequence of Proposition~\ref{prop_dimgraph}, any strongly monoH\"older function $f$ on $[a,b]$ with exponent $\alpha$ is fractional differentiable on $(a,b)$ for any order in $(0,\alpha)$ and nowhere fractional differentiable on $(a,b)$ for any order in $(\alpha,1)$.
	
\subsection{Lineability and algebrability}
	
	To finish this section, we present some definitions and techniques from lineability theory that enable us to formalize our results correctly.   
	First, we have the following notion. 
	
	Let $X$ be a vector space, $M$ a subset of $X$, and $\kappa$ a cardinal
	number, we say that $M$ is {\em $\kappa$-lineable} if $M \cup \{0\}$ contains a vector subspace $V$ of $X$ of dimension $\kappa$.  
	The set $M$ is simply {\em lineable} if the existing subspace is infinite dimensional \cite{AGS,GQ,Seo}. 
	When $X$ is a topological vector space and $V$ can be chosen to be dense in $X$, we  say that $M$ is {\em $\kappa$-dense-lineable} (or, simply, dense-lineable if $\kappa$ is infinite).
		
	We also need the notion of algebrability introduced in  \cite{APS, AS}. If  $\mathcal{A}$ is an algebra
	and  $\mathcal{B}$ is a subset of $\mathcal{A}$, we say that the set
	$\mathcal{B}$ is {\em $\kappa$-algebrable}  if $\mathcal{B} \cup \{
	0\}$ contains a $\kappa$-generated subalgebra $\mathcal{C}$ of
	$\mathcal{A}$.
	If $\kappa$ can be chosen to be infinite, we simply say that $\mathcal B$ is \textit{algebrable}.
	Of course, any algebrable set is, automatically, lineable as well.
	In general, the converse is false (see, e.g., \cite{ar} for a collection of examples illustrating this statement).
		
	Let us now recall one of the strongest notions in algebrability introduced in \cite{BG1}.
	Given a commutative algebra $\mathcal{A}$ and a cardinal number $\kappa$, a subset $\mathcal{B} \subseteq \mathcal{A}$ is {\em strongly \(\kappa\)-algebrable}  if there exists a \(\kappa\)-generated free algebra $\mathcal C$ contained in $\mathcal{B} \cup \{ 0\}$. We recall that a subset $X$ of a commutative algebra generates a {\em free subalgebra} if for each polynomial $P$ without a constant term and any $x_1, \dots, x_n \in X$, we have $P(x_1, \dots, x_n) =0$ if and only if $P=0$ (that is, the set of all elements of the form $x_1^{k_1} \dots x_n^{k_n}$ where $x_1, \dots, x_n \in X$ and where $k_1, \dots, k_n \in \bN_0$ are not all equal to $0$, is linearly independent).
	
	Finally, let us recall a technique developed in \cite{BBF}, the so-called {\em exponential-like function method}, that  allows to obtain the strong algebrability of some sets of functions defined on $[0,1]$, see \cite{BBF,GGMS}.   Exponential-like functions are particularly useful for constructing free algebras due to their distinct growth properties. 	We say that a function $g : \R \rightarrow \R$ is {\em exponential-like (of range $m$)} whenever $g$ is given by
	\[
	g(x) = \sum_{i=1}^m a_i e^{\beta_ix}
	\]
	for some distinct non-zero real numbers $\beta_1, \dots, \beta_m$ and
	some non-zero real numbers $a_1, \dots, a_m$.  These functions are injective modulo finite exceptions (Lemma \ref{lemma_explike}), ensuring algebraic independence when composed with suitable base functions like Weierstrass' monster functions. 

	In \cite{BBF}, the authors proved the following very useful property of exponential-like functions. 
	
		\begin{lemma} \cite{BBF} \label{lemma_explike}
			For every  $m \in \bN$, every exponential-like function $g$ of range $m$ and every $c \in \R$, the level set $g^{-1}(\{c\})$ has at most $m$ elements and 				there exists a finite decomposition of $\R$ into intervals such that $g$ is strictly monotone in each of them.
		\end{lemma}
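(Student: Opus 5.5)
Write $g(x)=\sum_{i=1}^m a_i e^{\beta_i x}$ with the $\beta_i$ distinct and non-zero and all $a_i\neq 0$. The plan is to prove the monotonicity statement first and then read off the level-set bound from it. Both follow from a zero-counting principle for exponential sums. I would prove this principle by induction on $m$: a function $h(x)=\sum_{i=1}^k c_i e^{\gamma_i x}$, with distinct real exponents $\gamma_i$ and not all $c_i$ zero, has at most $k-1$ real zeros.

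For $k=1$, the function $c_1e^{\gamma_1 x}$ has no zeros. For the inductive step:
\begin{itemize}
\item Multiply $h$ by $e^{-\gamma_k x}$. This does not change the zero set, and gives $c_k+\sum_{i<k} c_i e^{(\gamma_i-\gamma_k)x}$.
\item Differentiate. The result is $\sum_{i<k} c_i(\gamma_i-\gamma_k)e^{(\gamma_i-\gamma_k)x}$, an exponential sum with $k-1$ distinct exponents.
\item If all of its coefficients vanish, then $c_i=0$ for $i<k$ and $h=c_ke^{\gamma_k x}$ with $c_k\neq0$, which has no zeros. Otherwise the derivative has at most $k-2$ zeros by the inductive hypothesis.
\item By Rolle's theorem, a function whose derivative has at most $k-2$ zeros has at most $k-1$ zeros.
\end{itemize}

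For the monotonicity part, note that $g'(x)=\sum_i a_i\beta_i e^{\beta_i x}$. It has distinct exponents and non-zero coefficients, since $a_i\beta_i\neq 0$. By the principle above, $g'$ has at most $m-1$ real zeros. These zeros split $\R$ into at most $m$ open intervals, and $g'$ has constant sign on each one. Hence $g$ is strictly monotone on each of them, and indeed on their closures. This gives the required finite decomposition of $\R$ into intervals.

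For the level set $g^{-1}(\{c\})$, there are two cases:
\begin{itemize}
\item If $c\neq 0$, then $g-c$ is an exponential sum with $m+1$ distinct exponents $\beta_1,\dots,\beta_m,0$. This is exactly where the hypothesis $\beta_i\neq0$ is used. So $g-c$ has at most $m$ zeros.
\item If $c=0$, the principle directly gives at most $m-1\le m$ zeros.
\end{itemize}
Alternatively, strict monotonicity on each of the at most $m$ pieces already gives at most one solution per piece, hence at most $m$ in total.

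The only delicate point is the degenerate case inside the induction, where the differentiated sum becomes identically zero. It is handled as above, by noting that the remaining single exponential never vanishes. Everything else is routine.
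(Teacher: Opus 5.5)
Your proof is correct and is essentially the standard argument. The paper does not prove this lemma itself but quotes it from \cite{BBF}, where it is obtained by the same induction on the number of exponentials, using Rolle's theorem after factoring out one exponential: $g'=0$ becomes the level-set equation $\sum_{i<m} a_i\beta_i e^{(\beta_i-\beta_m)x}=-a_m\beta_m$ for a function of range $m-1$. Your repackaging as a zero count for general exponential sums, with the exponent $0$ allowed, is a harmless reorganization, and it even gives the slightly sharper bounds of $m-1$ zeros when $c=0$ and at most $m$ monotonicity intervals.
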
. 
	
	Finally, the following general result for strong algebrability was obtained in \cite{BBF}
	
		\begin{proposition} \cite{BBF} \label{prop_explike}
			Let $\mathcal{F}$ be a family of functions from $[0,1]$ to $\R$ and assume that there is an $F \in \mathcal{F}$ such that $g \circ F \in \mathcal{F}\setminus \{0\}$ for any exponential-like function $g$.
			Then, $\mathcal{F}$ is strongly $\mathfrak{c}$-algebrable.
			In particular, if $\mathcal{H}$ is a Hamel basis of $\R$, then $\{  \exp \circ (rF) : r \in \mathcal{H} \}$ is a system of generators of a free algebra contained in $\mathcal{F} \cup \{0\}$.
		\end{proposition}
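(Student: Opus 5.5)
This is the exponential-like function method of \cite{BBF}. Let $H$ be a Hamel basis of $\R$ over $\mathbb{Q}$; its cardinality is $\mathfrak c$. The plan is to show that the functions $\exp\circ(rF)$, $r\in H$, generate a free algebra lying in $\mathcal F\cup\{0\}$.

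\textbf{Step 1: every element of the algebra is $g\circ F$.} Take a nonzero polynomial $P$ without constant term and distinct $r_1,\dots,r_n\in H$. Form
\[
P\bigl(e^{r_1F},\dots,e^{r_nF}\bigr).
\]
It is a finite linear combination of monomials $e^{(k_1r_1+\dots+k_nr_n)F}$, where the $k_i\in\bN_0$ are not all zero. Because $H$ is $\mathbb{Q}$-linearly independent, distinct exponent tuples $(k_1,\dots,k_n)$ give distinct exponents $\beta=\sum k_ir_i$. Each such $\beta$ is nonzero, since the $k_i$ are not all zero. Hence the combination equals $g\circ F$, where $g(x)=\sum_j a_je^{\beta_jx}$ has distinct nonzero $\beta_j$. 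After discarding the zero coefficients, $g$ is exponential-like, provided some coefficient $a_j$ is nonzero, which holds since $P\neq 0$.

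\textbf{Step 2: membership.} By hypothesis, $g\circ F\in\mathcal F\setminus\{0\}$ for every exponential-like $g$. So every nonzero element of the generated algebra lies in $\mathcal F$.

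\textbf{Step 3: freeness.} Step 2 gives $P(e^{r_1F},\dots,e^{r_nF})=g\circ F\neq0$ whenever $P\neq 0$. This is exactly the definition of a free system of generators.

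\textbf{Step 4: cardinality.} The generators $\exp\circ(rF)$, $r\in H$, are pairwise distinct. If two coincided, their difference would be $g\circ F$ with $g=e^{rx}-e^{r'x}$, which is exponential-like and would give $0$, contradicting Step 2. Therefore the free algebra is $\mathfrak c$-generated, and $\mathcal F$ is strongly $\mathfrak c$-algebrable.

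\textbf{Main obstacle.} The main point to verify is the bookkeeping in Step 1. One must check that the collected exponents are pairwise distinct and nonzero, and that they reflect the monomial structure of $P$ injectively. Both facts follow from the rational independence of the Hamel basis together with the absence of a constant term in $P$. Lemma~\ref{lemma_explike} is not needed here, since nonvanishing is part of the hypothesis; it only matters in later applications, when verifying that hypothesis.
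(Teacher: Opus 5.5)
Your proof is correct and is essentially the standard exponential-like function argument from the source the paper cites (the paper quotes this proposition without proof): you write $P(e^{r_1F},\dots,e^{r_nF})=g\circ F$, the exponents $\sum_i k_ir_i$ are pairwise distinct and nonzero by the $\mathbb{Q}$-linear independence of the Hamel basis, and the hypothesis then gives both membership in $\mathcal{F}\setminus\{0\}$ and freeness. Your remark that Lemma~\ref{lemma_explike} is not needed here, only when checking the hypothesis in applications, is also right.
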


	\section{Box and Hausdorff dimensions of continuous functions}\label{sec:algebrability}

	In this section, we obtain two results of algebrability related with the dimension of the graph of functions in $C^{\alpha}[0,1]$. 
	Our aim in this section is to answer affirmatively Question~\ref{question1} and complement \cite{LiZhSh}. 
	More precisely, for every $s \in (1,2]$, the aim is  to construct a subalgebra of $C[0,1]$ generated by $\mathfrak{c}$-many algebraically independent functions whose graphs have box and Hausdorff dimensions
	exactly equal to  $s$. 
	The strategy we adopt is the following: we will work in the H\"older space $C^{\alpha}[0,1]$, where $\alpha = 2-s$, which is a subalgebra of $C[0,1]$ and has the property that the graph of every function of $C^{\alpha}[0,1]$ has an upper box dimension bounded by $s$ by Proposition \ref{prop_dimgraph}.
	We will then only have to control the dimension from below. 
	
	Note that a result of prevalence concerning the dimensions of graphs of functions belonging $C^{\alpha}[0,1]$ has already been obtained.
	In \cite{CN10}, it is indeed proved that in the sense of prevalence,  the typical  behavior of the graph of functions in $C^{\alpha}[0,1]$ is as irregular as allowed by the global H\"oder regularity, i.e., the Hausdorff dimension of the graph is $2-\alpha$. Also,  it is proved in \cite[Theorem 11.6]{Falconer:86} that
	the graph of a
	Baire generic function in $C^{0}[0,1]$ has a Hausdorff dimension
	equal to $1$. 
		
	For completeness, we begin by proving the following well-known lemma on the Hausdorff dimension of Weierstrass' function.
	
	\begin{lemma}\label{hausdorffweierstrass}
		Let $0<a<1$, $b$ be an integer such that $ab \geq 1$ and $\displaystyle \frac{\log(a)}{\log (b)} = - \alpha $, and $I\subseteq [0,1]$ a nondegenerate interval.
		Then,
			$$
			\dim_{\mathcal{H}}(W_{a,b}\vert_I) = 2 - \alpha.
			$$
	\end{lemma}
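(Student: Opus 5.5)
The plan is to reduce the statement on an arbitrary nondegenerate interval $I$ to the known result on the whole of $[0,1]$, recalled in \eqref{HausBox}, using self-affinity of $W_{a,b}$ and Proposition~\ref{prop:dimLip}.

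\textbf{Upper bound.} It suffices to show $W_{a,b}\in C^{\alpha}[0,1]$ when $\alpha<1$; then Proposition~\ref{prop_dimgraph}(i) and \eqref{eq:reldim} give $\dim_{\mathcal H}(W_{a,b}\vert_I)\le \overline{\dim}_B(W_{a,b}\vert_I)\le 2-\alpha$. For $|x-y|=h$, split the series at $N$ with $b^{-N}\approx h$. Bound the low-frequency terms via the mean value theorem by $\sum_{k<N}a^k b^k 2\pi h\lesssim (ab)^N h$, and the tail by $2\sum_{k\ge N}a^k\lesssim a^N$. Since $a=b^{-\alpha}$, both are $\lesssim h^{\alpha}$. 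If $\alpha=1$ (i.e.\ $ab=1$) the upper bound is $\dim\le 2-\alpha=1$ only trivially false; in that case the claim is $\dim_{\mathcal H}=1$, which is immediate, because the graph of a continuous function on a nondegenerate interval has Hausdorff dimension at least $1$ (it projects onto $I$) and the box dimension bound still gives at most $1$ via the same splitting with a $\log$ factor. So assume $\alpha<1$.

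\textbf{Lower bound.} Since $b$ is an integer, $W_{a,b}$ is $1$-periodic, and
\[
W_{a,b}(x)=\cos(2\pi x)+a\,W_{a,b}(bx).
\]
Iterating $n$ times gives $W_{a,b}(x)=P_n(x)+a^nW_{a,b}(b^nx)$, where $P_n$ is a trigonometric polynomial, hence Lipschitz. Choose $n$ so large that $I$ contains an interval $J=[jb^{-n},(j+1)b^{-n}]$. The map $x\mapsto b^nx-j$ sends $J$ onto $[0,1]$, and by periodicity $W_{a,b}(b^nx)=W_{a,b}(b^nx-j)$. Thus the graph of $W_{a,b}\vert_J-P_n\vert_J$ is the image of the graph of $W_{a,b}$ on $[0,1]$ under the affine bijection $(u,v)\mapsto(b^{-n}(u+j),a^nv)$. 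This map is bi-Lipschitz, so it preserves Hausdorff dimension. Proposition~\ref{prop:dimLip} then removes $P_n$, giving $\dim_{\mathcal H}(W_{a,b}\vert_J)=\dim_{\mathcal H}(W_{a,b}\vert_{[0,1]})=2-\alpha$ by \eqref{HausBox}. Monotonicity of Hausdorff dimension under inclusion of graphs yields $\dim_{\mathcal H}(W_{a,b}\vert_I)\ge 2-\alpha$.

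\textbf{Main obstacle.} The only deep input is the full-interval identity \eqref{HausBox}, the results of \cite{keller,shen}, which I take as given. The care needed is in checking that \cite{shen} covers all integer $b$ with $ab\ge1$ and applies on $[0,1]$ rather than on a period interval of different length; periodicity handles the latter. Everything else is the routine self-affinity bookkeeping above.
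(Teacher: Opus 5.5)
Your proof is correct and follows essentially the paper's route: split off the first $n$ terms of the series as a Lipschitz trigonometric polynomial, remove it with Proposition~\ref{prop:dimLip}, and exploit the structure of the high-frequency tail. The only difference is in the last step. The paper uses the $b^{-k_0}$-periodicity of the tail to compare it with the tail on all of $[0,1]$, and then applies Proposition~\ref{prop:dimLip} again on $[0,1]$. You instead use the self-affine identity $W_{a,b}-P_n=a^nW_{a,b}(b^n\cdot)$ and a bi-Lipschitz affine map onto the graph over $[0,1]$.

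Your separate H\"older upper bound, and the special treatment of $\alpha=1$, are unnecessary. The graph of $W_{a,b}\vert_I$ is contained in that of $W_{a,b}\vert_{[0,1]}$, so monotonicity and \eqref{HausBox} give $\dim_{\mathcal H}(W_{a,b}\vert_I)\le 2-\alpha$ directly.
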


	\begin{proof}
		Take $k_0\in \mathbb N$ large enough so that the length of  $I$ is larger than $b^{-k_0}$.
		Set 
			$$
			W^{k_0}_{a,b}(x) := \sum_{k=k_0}^{\infty} a^k \cos (b^k 2 \pi x) .
			$$
		Now define
			$$
			g := \sum_{k=0}^{k_0-1} a^k \cos(b^k 2\pi x).
			$$
		Note that $W_{a,b} = W_{a,b}^{k_0}+ g$ and $g$ is Lipschitz.
		Thus, by Proposition~\ref{prop:dimLip}, we have
			$$
			\dim_{\mathcal{H}}(W_{a,b}) = \dim_{\mathcal{H}}(W_{a,b}^{k_0}+ g) = \dim_{\mathcal{H}}(W_{a,b}^{k_0}).
			$$
		Moreover, since the restriction of a Lipschitz function to any interval is Lipschitz, we have again from Proposition~\ref{prop:dimLip} that
			$$
			\dim_{\mathcal{H}}(W_{a,b}\vert_I) = \dim_{\mathcal{H}}(W_{a,b}^{k_0}\vert_I + g\vert_I) = \dim_{\mathcal{H}}(W_{a,b}^{k_0}\vert_I).
			$$
		Now, as $W^{k_0}_{a,b}$ is $b^{-k_0}$-periodic, the restriction $W^{k_0}_{a,b}\vert_I$ satisfies
			$$
			\dim_{\mathcal{H}}(W^{k_0}_{a,b}\vert_I) =    \dim_{\mathcal{H}}(W^{k_0}_{a,b}).
			$$
		Consequently, by \eqref{HausBox}, it yields
			$$
			\dim_{\mathcal{H}}(W_{a,b}\vert_I) = 2 - \alpha. 
			$$
	\end{proof}
		
	\begin{theorem}\label{thm:alggraph}
		Let $\alpha \in  (0,1)$. 
		The set of functions $f \in
		C^{\alpha}[0,1]$ with 
			$$
			\dim_{\mathcal{H}}(f) = \dim_{B} (f)   = 2-\alpha
			$$
		is strongly $\mathfrak{c}$-algebrable.
	\end{theorem}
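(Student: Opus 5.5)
We apply Proposition~\ref{prop_explike} with $\mathcal{F}$ the set of $f\in C^{\alpha}[0,1]$ satisfying $\dim_{\mathcal{H}}(f)=\dim_B(f)=2-\alpha$, and with $F:=W_{a,b}|_{[0,1]}$. Here $b\ge 2$ is an integer and $a=b^{-\alpha}$, so that $ab=b^{1-\alpha}\ge 1$ and $\log a/\log b=-\alpha$. By \eqref{HausBox}, $F\in\mathcal{F}$. It remains to show that $g\circ F\in\mathcal{F}\setminus\{0\}$ for every exponential-like function $g$ of range $m$.

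\textbf{Regularity and upper bound.} Since $W_{a,b}\in C^\alpha[0,1]$ (the standard estimate splitting the series at $b^{-k}\approx|x-y|$), and $g$ is smooth and hence Lipschitz on the compact range $F([0,1])$, we get $g\circ F\in C^\alpha[0,1]$. Proposition~\ref{prop_dimgraph}(i) then gives $\overline{\dim}_B(g\circ F)\le 2-\alpha$. Also $g\circ F\neq 0$: since $F$ is continuous and nonconstant, $F([0,1])$ is a nondegenerate interval, while by Lemma~\ref{lemma_explike} $g$ vanishes at no more than $m$ points.

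\textbf{Lower bound.} By \eqref{eq:reldim}, it suffices to prove $\dim_{\mathcal{H}}(g\circ F)\ge 2-\alpha$. Lemma~\ref{lemma_explike} decomposes $\R$ into finitely many intervals on which $g$ is strictly monotone, so the critical points of $g$ are finite in number. I would choose a value $y_0$ in the interior of $F([0,1])$ with $g'(y_0)\neq 0$; this is possible because $g'$ is a nonzero exponential-like function with finitely many zeros. Then take $x_0\in(0,1)$ with $F(x_0)=y_0$. By continuity there is a nondegenerate interval $I\ni x_0$ such that $F(I)\subseteq J$, where $J$ is a compact interval on which $|g'|\ge c>0$. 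On $J$, the map $g$ is a bi-Lipschitz diffeomorphism onto $g(J)$. Hence the map $(x,y)\mapsto(x,g(y))$ is bi-Lipschitz from $I\times J$ onto $I\times g(J)$, and it sends the graph of $F|_I$ onto the graph of $(g\circ F)|_I$. Bi-Lipschitz maps preserve Hausdorff dimension, and Lemma~\ref{hausdorffweierstrass} gives $\dim_{\mathcal{H}}(F|_I)=2-\alpha$. By monotonicity of Hausdorff dimension,
\[
\dim_{\mathcal{H}}(g\circ F)\ \ge\ \dim_{\mathcal{H}}\big((g\circ F)|_I\big)\ =\ 2-\alpha .
\]
Combining this with the upper bound gives $2-\alpha\le\dim_{\mathcal{H}}\le\underline{\dim}_B\le\overline{\dim}_B\le 2-\alpha$. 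So the box dimension exists, both dimensions equal $2-\alpha$, and Proposition~\ref{prop_explike} concludes the proof.

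\textbf{Main obstacle.} The delicate step is the lower bound for the Hausdorff dimension. We cannot use a global bi-Lipschitz argument, because $g$ may have critical points inside the range of $F$. This is exactly why we localize to an interval $I$ where $F$ stays away from the critical points, and why Lemma~\ref{hausdorffweierstrass} is needed for restrictions of $W_{a,b}$ to arbitrary subintervals.
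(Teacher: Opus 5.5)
Your proposal is correct and takes essentially the same route as the paper: apply Proposition~\ref{prop_explike} with $F=W_{a,b}$, get the upper box bound from $g\circ W_{a,b}\in C^\alpha[0,1]$ and Proposition~\ref{prop_dimgraph}(i), and get the Hausdorff lower bound by restricting to an interval $I$ where $(x,y)\mapsto(x,g(y))$ is bi-Lipschitz and applying Lemma~\ref{hausdorffweierstrass}. Your way of choosing $I$ is slightly more careful than the paper's, which deduces $g'\neq 0$ on $W_{a,b}(I)$ from strict monotonicity of $g$ alone; you pick $y_0$ with $g'(y_0)\neq 0$ and rightly justify this by $g'$ being exponential-like (your preceding inference ``strictly monotone pieces, hence finitely many critical points'' is not valid on its own, but it is not needed).
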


	\begin{proof}
		Let $g$ be an exponential-like function. Using Proposition~\ref{prop_explike}, it suffices to prove that 
			$$
			\dim_{\mathcal{H}}(g \circ W_{a,b})  =
			\dim_{B} (g \circ W_{a,b}) =  2- \alpha.
			$$
		Note that $g \circ W_{a,b} \in C^\alpha [0,1]$ since 
			$$
			\big|g \circ W_{a,b}(x) - g \circ W_{a,b} (y)\big|  \leq \|g'\|_{L^{\infty}[0,1]}
			\|W\|_{C^{\alpha}[0,1]} |x-y|^{\alpha}
			$$
		for all $x,y \in [a,b]$. 
		Hence, by Proposition~\ref{prop_dimgraph}(i), it implies that
			$$
			\overline{\dim}_B (g \circ W_{a,b}) \leq  2- \alpha.
			$$
		From \eqref{eq:reldim}, it is enough to prove that 
			$$
			\dim_{\mathcal{H}}(g \circ W_{a,b})  \geq  2- \alpha.
			$$
		Lemma \ref{lemma_explike} and the intermediate value theorem applied to $W_{a,b}$ guarantee the existence of an interval $I \subseteq [0,1]$ such that $g$ is strictly monotone on $W_{a,b}(I)$. 
		In particular, it is invertible on
		$W_{a,b}(I)$. Moreover,  since $g$ is continuously differentiable on
		$W_{a,b}(I)$ and $g'$ does not vanish on $W_{a,b}(I)$, its
		inverse is also continuously differentiable. Hence, the function
		$$
		(x,y) \in I \times W_{a,b}(I) \mapsto \big(x,g(y)\big) \in I \times g\big(W_{a,b}(I)\big)
		$$
		is bi-Lipschitz. 
		Since the Hausdorff dimension is
		invariant under bi-Lispchitz maps (see, e.g., \cite{Falconer:97}), we obtain from Lemma~\ref{hausdorffweierstrass} that
			$$
			\dim_{\mathcal{H}}(g \circ W_{a,b})  \geq
			\dim_{\mathcal{H}}(g \circ W_{a,b}\vert_{I}) =
			\dim_{\mathcal{H}}(W_{a,b}\vert_{I}) = 2- \alpha, 
			$$ 
		which concludes the proof. 
	\end{proof}
		
	\begin{remark}
		Following \cite{BMPS}, we say that a function has box dimension
		(resp. Hausdorff dimension) $\alpha$ \emph{everywhere on $[0,1]$} if
			$$
			\dim_{B} (f|_{[a,b]}) = \alpha  \quad (\text{resp. }\dim_{\mathcal{H}} (f|_{[a,b]}) = \alpha \,)
			$$
		for all $[a,b]\subseteq [0,1]$.
		For instance, Weierstrass' functions satisfy this homogeneity property. 
		A direct adaptation of the previous proof shows that  the set of functions of $C^{\alpha}[0,1]$ which have box and Hausdorff dimensions $\alpha$ everywhere on $[0,1]$ is strongly $\mathfrak{c}$-algebrable.
	\end{remark}
		
	As a direct consequence, we obtain the following result, answering Question~\ref{question1} raised in \cite{BMPS} and complementing \cite[Theorem~2.7]{LiZhSh}. 
	
	\begin{corollary}
		Let $s \in  (1,2]$. The set of functions $f \in C[0,1]$
		with
			$$
			\dim_{\mathcal{H}}(f) = \dim_{B} (f) = s
			$$
		is strongly $\mathfrak{c}$-algebrable.
	\end{corollary}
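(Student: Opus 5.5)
The corollary splits into two cases according to whether $s<2$ or $s=2$.

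\emph{Case $s\in(1,2)$.} Set $\alpha:=2-s\in(0,1)$ and apply Theorem~\ref{thm:alggraph}. It yields a free algebra $\mathcal C$ with $\mathfrak c$ generators, contained in $C^{\alpha}[0,1]\subseteq C[0,1]$, every nonzero element of which has a graph with Hausdorff and box dimension $2-\alpha=s$. Freeness is an intrinsic algebraic property: it only says that the monomials in the generators are linearly independent. Likewise, the subalgebra structure is inherited, since the multiplication on $C^\alpha[0,1]$ is the pointwise one of $C[0,1]$. So $\mathcal C$ is also a $\mathfrak c$-generated free subalgebra of $C[0,1]$ contained in our set together with $\{0\}$. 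This settles the case.

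\emph{Case $s=2$.} Here Theorem~\ref{thm:alggraph} does not apply, since it would require $\alpha=0$. I would rerun the exponential-like method of Proposition~\ref{prop_explike} with a base function $F\in C[0,1]$ satisfying
$$\dim_{\mathcal H}(F|_I)=2 \quad\text{for every nondegenerate interval } I\subseteq[0,1].$$

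To build $F$, I would glue Weierstrass functions with exponents tending to $0$. Choose $\alpha_n\downarrow 0$ and disjoint intervals $J_n\subseteq[0,1]$ accumulating only at $0$. On each $J_n$, place a copy of $W_{a_n,b_n}$ with $\log a_n/\log b_n=-\alpha_n$, multiplied by a smooth bump that is constant on a middle subinterval $J_n'$ and vanishes at the endpoints of $J_n$. Scale these pieces by $\varepsilon_n\to0$ so that the sum is continuous at $0$. A smooth multiplier that is constant on $J_n'$ and the scaling by $\varepsilon_n$ leave Hausdorff dimension unchanged there. Hence Lemma~\ref{hausdorffweierstrass} gives $\dim_{\mathcal H}(F|_{I})=2-\alpha_n$ for every interval $I\subseteq J_n'$.

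Now let $g$ be exponential-like. By Lemma~\ref{lemma_explike}, $g$ has finitely many monotonicity breaks. Since $\varepsilon_n\to0$, the set $F(J_n')$ shrinks towards $F(0)$. I would then choose $F(0)$, or add a small constant shift, so that for all large $n$ the interval $F(J_n')$ avoids the finitely many critical points of $g$. On such $J_n'$ the map $(x,y)\mapsto(x,g(y))$ is bi-Lipschitz, exactly as in the proof of Theorem~\ref{thm:alggraph}. This gives $\dim_{\mathcal H}(g\circ F)\ge 2-\alpha_n$ for infinitely many $n$, hence $\dim_{\mathcal H}(g\circ F)=2$. The inequalities \eqref{eq:reldim} then give $\dim_B(g\circ F)=2$ as well. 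Since $g\circ F$ is continuous and nonzero, Proposition~\ref{prop_explike} completes the argument.

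\textbf{Main obstacle.} The delicate step is making sure that $g$ is monotone on $F(J_n')$ for infinitely many $n$. This is not automatic, because $F(J_n')$ accumulates at the single value $F(0)$, which could be a critical point of $g$. To avoid this, I would first try choosing $J_n'$ more carefully: by the intermediate value theorem, each $F(J_n')$ is an interval, and I would pick a small subinterval of $J_n'$ on which $F$ stays away from the finitely many critical values of $g$. Lemma~\ref{hausdorffweierstrass} holds on every subinterval, so the dimension bound is unaffected.
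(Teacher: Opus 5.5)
Your argument is correct. For $s<2$ it is the paper's argument; for $s=2$ you take a different route. The paper does not build a base function. It takes a continuous $F$ whose graph has Hausdorff dimension $2$ on \emph{every} subinterval of $[0,1]$, quoted from the literature, and reruns the proof of Theorem~\ref{thm:alggraph} once: a single interval $I$ on which $g$ is bi-Lipschitz on $F(I)$ already gives $\dim_{\mathcal H}(g\circ F)\ge 2$.

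You instead glue scaled copies of $W_{a_n,b_n}$ with $\alpha_n\downarrow 0$, and you reach $2$ only as $\sup_n(2-\alpha_n)$, one piece at a time. This makes the proof self-contained: it relies only on Lemma~\ref{hausdorffweierstrass}, i.e.\ on \eqref{HausBox}, which the paper already uses. The price is some bookkeeping; for instance, $\varepsilon_n$ must beat $\|W_{a_n,b_n}\|_\infty\le(1-a_n)^{-1}$, which may blow up as $a_n\to1$. On the other hand, your $F$ is far from homogeneous: its graph has dimension $<2$ over any interval bounded away from $0$. So it would not give a ``dimension $2$ everywhere'' version, whereas the paper's base function would.

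On your obstacle paragraph:
\begin{itemize}
\item The shift idea cannot work. $F$ must be fixed before $g$ ranges over all exponential-like functions, and every $c\in\mathbb R$ is a zero of $g'$ for some such $g$; for example, $g(x)=e^{x}-\tfrac12 e^{2x-c}$.
\item Your fallback is exactly the corresponding step in the proof of Theorem~\ref{thm:alggraph}. Since $F(J_n')$ is a nondegenerate interval, there is a compact $I_n\subseteq J_n'$ with $F(I_n)\cap(g')^{-1}(\{0\})=\varnothing$. On such an $I_n$, $|g'|$ is bounded below on $F(I_n)$, so the map is bi-Lipschitz.
\item This works for \emph{every} $n$, so the accumulation at $F(0)$ plays no role and ``infinitely many $n$'' is automatic.
\item What $F(I_n)$ must avoid are the critical points of $g$, not its critical values.
\end{itemize}
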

	
	\begin{proof}
		If $s<2$, it suffices to consider the algebra given by Theorem
		\ref{thm:alggraph} for $\alpha = 2-s$.  If $s=2$, let  $F$ be a continuous function whose graph has
		Hausdorff dimension $2$ everywhere in $[0,1]$ (see, e.g., \cite{W95}). Then, by Proposition \ref{prop_explike}, it suffices to prove that 
		$$
		\dim_{\mathcal{H}}(g \circ F) \geq 2.
		$$
		for any exponential-like function $g$. 
		We proceed analogously as in the proof of Theorem~\ref{thm:alggraph} to obtain that $\dim_{\mathcal{H}}(g
		\circ F) \ge 2$, the other inequality being obvious. 
	\end{proof}

		\section{H\"older pointwise regularity and fractional differentiability}\label{sec:lineab}

	Similarly to what was done in the case of nowhere differentiable functions, the size of the set of functions having the same Hölder exponent $\alpha$ belonging to the H\"older space of exponent $\alpha$ has been investigated using the concepts of Baire residuality and prevalence \cite{H94,Jaffard00}. 
	The authors obtained that a generic function is as irregular as allowed by the regularity of the space at every point.
	All these results have been obtained via wavelet decomposition  of functions and thanks to a characterization of the regularity based on the wavelet coefficients. 
	We summarize below the results obtained in \cite{H94,Jaffard00}.

	\begin{proposition}\cite{H94,Jaffard00}\label{T2} 
		The set of functions $f \in C^{\alpha}[0,1]$ satisfying  $h_f (x) = \alpha$ for every $x \in [0,1]$ is residual and prevalent in $C^{\alpha}[0,1]$.
	\end{proposition}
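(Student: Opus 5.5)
The plan is to work with a wavelet characterization of both the global space $C^{\alpha}[0,1]$ and the pointwise exponent. Fix a compactly supported, sufficiently regular wavelet basis on $[0,1]$ (Daubechies, or for $\alpha<1$ even the Schauder basis). Write the coefficients as $c_{j,k}$, indexed by dyadic intervals $\lambda=\lambda_{j,k}=[k2^{-j},(k+1)2^{-j})$. For $0<\alpha<1$ the characterization reads
\[
f \in C^{\alpha}[0,1] \iff \sup_{j,k} 2^{\alpha j}|c_{j,k}|<\infty .
\]
The pointwise exponent is controlled by wavelet leaders
\[
d_j(x)=\sup\{|c_{\lambda'}| : \lambda'\subseteq 3\lambda_j(x)\},
\]
through Jaffard's formula $h_f(x)=\liminf_{j\to\infty}\log d_j(x)/\log 2^{-j}$, valid as long as $f$ is uniformly Hölder for some positive exponent. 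Since $f\in C^\alpha$ already gives $h_f\ge\alpha$ everywhere, the whole task is to prove the upper bound $h_f(x)\le\alpha$ at every $x$.

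\textbf{Residuality.} Consider the sets
\[
E_N=\Bigl\{f\in C^\alpha[0,1] : \text{for every } x \text{ there are infinitely many } j \text{ with } d_j(x)\ge \tfrac{1}{N}\,2^{-\alpha j}/j\Bigr\}.
\]
This condition forces $h_f(x)\le\alpha$ for all $x$. I would show the intersection over $j\ge J$ of open dense sets is contained in the good set, via a Baire argument.

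\emph{Openness.} For fixed $j$, the condition ``every dyadic interval of generation $j$ has a coefficient of size greater than $\varepsilon 2^{-\alpha j}/j$'' involves finitely many coefficients. Each coefficient is a continuous linear functional on $C^\alpha$, so the set is open.

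\emph{Density.} Given $f$ and $\eta>0$, add $\eta$ times the saturating function $\sum_{j,k}2^{-\alpha j}\psi_{j,k}$. This function has every coefficient of size $2^{-\alpha j}$, so $f+\eta\,\cdot$ lies in the target. Note it is close to $f$ in the $C^{\alpha}$ norm only up to $\eta$; that is fine because $\eta$ is arbitrary.

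To avoid nonseparability issues, I would work in the closure of polynomials, or more simply phrase the sets as
\[
G_J=\bigl\{\,\forall\,\lambda \text{ of generation } j\ge J,\ \exists\,\lambda'\subseteq 3\lambda \text{ of generation } \le j+\ell(j) \text{ with } |c_{\lambda'}|>\varepsilon_j 2^{-\alpha j}\bigr\},
\]
and take $\bigcap_J\bigcup$ suitably. The generation window $j+\ell(j)$ with $\ell(j)=o(j)$ keeps each condition finite, hence open.

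\textbf{Prevalence.} Take as probe the saturating function $F=\sum 2^{-\alpha j}\psi_{j,k}$, with transverse measure Lebesgue measure on $\{tF: t\in[0,1]\}$. For fixed $f$, look at the coefficients $c_\lambda(f)+t2^{-\alpha j}$ of $f+tF$. For each $\lambda$, the set of $t$ with $|c_\lambda(f)+t2^{-\alpha j}|<2^{-\alpha j}/j^2$ has length at most $2/j^2$.

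The main obstacle is here. A single $t$ must work simultaneously for all $\lambda$, and there are $2^j$ intervals at generation $j$, so a naive union bound fails. My fix is to use instead a finite-dimensional probe: a family of functions $F_1,\dots,F_m$ with disjointly interleaved supports, or random signs, together with a Borel–Cantelli argument over generations. Within each dyadic cube one then needs only some descendant coefficient to be large. Since descendants at $\ell(j)\approx\log j$ extra levels number about $j$, the independent ``bad'' events multiply to a probability like $(2/j^2)^{j}$, which beats $2^j$. If this proves too delicate, I would fall back on citing Jaffard's prevalence theorem in \cite{Jaffard00} directly, since the proposition is stated as a summary of \cite{H94,Jaffard00}.
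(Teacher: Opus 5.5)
The paper gives no proof of this proposition: it is quoted from Hunt (prevalence) and Jaffard (Baire genericity), so your fallback of citing them is exactly the paper's route. As a self-contained argument, your sketch has one fixable slip in the residuality part and a genuine gap in the prevalence part.

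\textbf{Residuality.} Write $O_j$ for the open set where every level-$j$ interval carries a coefficient (or leader) above $2^{-\alpha j}/j$. Your density step fails as written, because adding $\eta F$ with the fixed saturating function $F=\sum_{j,k}2^{-\alpha j}\Lambda(2^j\cdot-k)$ can cancel coefficients of $f$. For $f=-\eta F$ you simply get $0$. More generally, let the normalized coefficients $2^{\alpha j}c_{j,k}(f)$ run through a $1/j$-net of $[-1,0]$ at every level (on disjoint blocks of descendants if you use leaders). This gives an $f\in C^\alpha[0,1]$ with $f+\eta F\notin\bigcup_{j\ge J}O_j$ for every $\eta\in(0,1]$.

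The fix is to perturb sign-adaptively: $c_{j,k}\mapsto c_{j,k}+\eta\,2^{-\alpha j}\operatorname{sgn}(c_{j,k})$, with $\operatorname{sgn}(0)=1$. This is an $O(\eta)$ perturbation, because the $C^\alpha$ norm is equivalent to $\sup_{j,k}2^{\alpha j}|c_{j,k}|$. It makes every level-$j$ coefficient exceed $2^{-\alpha j}/j$ once $j>1/\eta$. Three smaller points:
\begin{enumerate}
\item A single $O_j$ is not dense (a ball of radius about $1/j$ around $0$ misses it), so the residual set must be $\bigcap_J\bigcup_{j\ge J}O_j$, not an intersection of the $O_j$.
\item Leaders are unnecessary here. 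If $|c_{j,k}|>2^{-\alpha j}/j$ for all $k$ on infinitely many levels, then $h_f\le\alpha$ everywhere already follows from \eqref{schaudercaract}.
\item The detour through the closure of polynomials is pointless. Baire's theorem needs completeness, not separability. Moreover, that closure is a proper closed, hence nowhere dense, subspace of $C^\alpha[0,1]$, so residuality inside it says nothing about $C^\alpha[0,1]$.
\end{enumerate}

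\textbf{Prevalence.} Here is the genuine gap. With a probe of fixed dimension $m$ (interleaved $F_1,\dots,F_m$), every bad event is a condition on the same $m$ parameters. So there is no independence across the roughly $j$ descendants of a level-$j$ interval. Since the translate $f$ is arbitrary, it can align the bad intervals of all descendants attached to the same $F_i$. A given interval is then bad with probability of order $j^{-2m}$, and $2^j j^{-2m}\not\to 0$. Planting different centres on different intervals even yields an $f$ for which every parameter value violates your level-$j$ condition for all large $j$. Hence your bound $(2/j^2)^j$ is unjustified.

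The independence your count needs requires an infinite-dimensional probe. For example, take the law of $G=\sum_{j,k}\varepsilon_{j,k}\,\omega_j2^{-\alpha j}\Lambda(2^j\cdot-k)$ with i.i.d.\ $\varepsilon_{j,k}$ uniform on $[-1,1]$ (or Rademacher). The damping $\omega_j\to0$ is essential. Without it the support is essentially the unit ball of $\ell^\infty$, which is not compact. In the non-separable space $C^\alpha[0,1]$, shyness must be witnessed by a compactly supported Borel probability measure.

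With this probe your mechanism does work:
\begin{enumerate}
\item For fixed $f$, each coefficient of $f+G$ has modulus below $\tfrac12\omega_{j'}2^{-\alpha j'}$ with probability at most $\tfrac12$, independently across coefficients.
\item Using the $2^{\ell(j)}\ge 2j$ descendants at level $j+\ell(j)$, a level-$j$ interval is bad with probability at most $2^{-2j}$.
\item The union over the $2^j$ intervals is at most $2^{-j}$, which is summable.
\item Borel--Cantelli together with \eqref{schaudercaract} gives $h_{f+G}\le\alpha$ everywhere almost surely, since the losses $\omega_{j'}$ and $2^{-\alpha\ell(j)}\approx j^{-\alpha}$ are subexponential.
\end{enumerate}
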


	It is worth mentioning at this point that in the complex case, Bernal-González, Bonilla, López-Salazar and the fourth author in \cite{BeBoLoSe} analyzed the existence of large spaces of nowhere H\"olderian functions with additional topological properties.
	
	\medskip
		
	The question we will analyze in this section is the following: 
		\begin{itemize}
			\item Is it possible to obtain lineability and algebrability  of the set of functions $f\in C^{\alpha}[0,1]$ satisfying  $h_f (x) = \alpha$ for every $x \in [0,1]$?
		\end{itemize}
		
	We will obtain positive and negative results, and see in particular that one cannot hope to construct an algebra of such functions. 
	First, we start by recalling some useful definitions and  results.

\medskip

We now focus on Schauder basis, which provides a convenient tool to study continuous functions on $[0,1]$.  Let $\Lambda$ denote the \emph{hat function} defined by
\[
\Lambda(x) := 
\begin{cases}
\min(x,1-x) & \text{if } x \in [0,1],\\
0 & \text{otherwise.}
\end{cases}
\]
Then every continuous function $f:[0,1]\to \R$ can be written as
\[
f(x) = f(0) + (f(1)-f(0))x + \sum_{j \in \bN_0} \sum_{k=0}^{2^j-1} c_{j,k} \Lambda(2^j x - k),
\]
where the convergence is uniform on $[0,1]$, and the coefficients are given by
\[
	c_{j,k} =  2  f\left( \frac{2k+1}{2^{j+1}}\right) - f \left(\frac{k}{2^j} \right) - f\left(\frac{k+1}{2^j} \right).
\]

\medskip

Using the coefficients in the Schauder basis, one can characterize the regularity of $f$ both globally and locally, see \cite{DLVM98}:

\begin{itemize}
\item {Global H\"older regularity:} $f \in C^\alpha[0,1]$ if and only if
\[
\sup_{j \in \bN_0} \sup_{k \in \{0, \dots, 2^j-1\}} |c_{j,k}| \, 2^{\alpha j} < \infty.
\]

\item {Pointwise H\"older regularity:} If $f \in C^\alpha(x_0)$, then there exists a constant $C>0$ such that
\begin{equation}\label{schaudercaract}
|c_{j,k}| \le C \, 2^{-\alpha j} \big(1 + |2^j x_0 - k|\big)^\alpha,
\end{equation}
for all $j \in \mathbb N_0$ and $k \in \{0, \dots, 2^j-1\}$.
Conversely, if \eqref{schaudercaract} holds and if there is $\varepsilon>0$ such that $f \in C^{\varepsilon}[0,1]$, then $f \in  C^\beta(x_0)$ for every $\beta < \alpha$.
\end{itemize}

		\medskip

	Let us start provide the lineability of the set considered in Proposition \ref{T2}.
	
	\begin{proposition} \label{prop:lin}
		The set of functions $f \in C^{\alpha}[0,1]$ such that $h_{f}(x) = \alpha$ for every
		$x \in [0,1]$ is $\mathfrak{c}$-lineable.
	\end{proposition}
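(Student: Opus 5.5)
Presumably $\alpha\in(0,1)$, as in Theorem~\ref{thm:alggraph}. The plan is to build $\mathfrak{c}$ many functions with disjoint Schauder supports and show that every nonzero finite linear combination has exponent exactly $\alpha$ everywhere, using the characterization \eqref{schaudercaract}. Partition the scales $\bN_0$ into $\mathfrak{c}$ many infinite sets with pairwise finite intersections: take an almost disjoint family $\{A_t : t\in\R\}$ of infinite subsets of $\bN$, for instance $A_t=\{j_n(t)\}$ where $j_n(t)$ encodes the first $n$ binary digits of $t$. For each $t$ define
\[
f_t(x) := \sum_{j\in A_t}\sum_{k=0}^{2^j-1} 2^{-\alpha j}\,\Lambda(2^j x-k).
\]
By the global characterization, $f_t\in C^\alpha[0,1]$, since its Schauder coefficients satisfy $|c_{j,k}|\le 2^{-\alpha j}$. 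These coefficients are the ones computed from $f_t$ by the formula, because $f_t(0)=f_t(1)=0$ and the expansion is unique.

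Let $f=\sum_{i=1}^n\lambda_i f_{t_i}$ with distinct $t_i$ and all $\lambda_i\neq 0$. Then $f\in C^\alpha[0,1]$, so $h_f(x)\ge\alpha$ at every $x$, and only the upper bound remains. The sets $A_{t_i}$ are almost disjoint, so there are infinitely many $j\in A_{t_1}\setminus\bigcup_{i\ge2}A_{t_i}$. For such $j$ the coefficient of $f$ is $c_{j,k}=\lambda_1 2^{-\alpha j}$ for every $k$. Now fix $x_0$ and suppose $f\in C^\beta(x_0)$ for some $\beta>\alpha$; since $\beta<1$ may be assumed, the polynomial in \eqref{Hol} is just a constant. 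For each $j$ choose $k$ with $|2^jx_0-k|\le 1$. Then \eqref{schaudercaract} gives $|\lambda_1|2^{-\alpha j}\le C\,2^{-\beta j}2^{\beta}$ for infinitely many $j$, which is a contradiction. Hence $h_f(x_0)=\alpha$ for every $x_0$, and so $f\neq 0$. In particular the $f_t$ are linearly independent, and their span is a $\mathfrak{c}$-dimensional subspace contained in the set together with $0$.

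The step I expect to need the most care is the almost-disjoint family. A genuinely disjoint partition of $\bN_0$ gives only countably many pieces, so reaching $\mathfrak{c}$ needs almost disjointness, and the argument relies on one scale set not being covered by finitely many others. A standard construction works: enumerate the nodes of the binary tree by $\bN$ and let $A_t$ be the set of nodes on the branch of $t$. Two distinct branches share only finitely many nodes, so a finite union of other branches meets $A_{t_1}$ in a finite set. One should also check the direction of \eqref{schaudercaract} used here, namely the necessary condition. It needs $h\le 1$, so that no polynomial correction of degree $\ge 1$ enters; this holds because $\beta<1$. The necessary condition holds directly, since $c_{j,k}$ is a second difference centred near $x_0$ and constants cancel in it.
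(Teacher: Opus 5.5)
Your proof is correct, but it takes a different route from the paper's.

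The paper starts from some $f\in C^\alpha[0,1]$ with $h_f\equiv\alpha$, whose existence it takes from the genericity result (Proposition~\ref{T2}). It then builds the family $f_a$, $a>0$, by damping the Schauder coefficients of $f$ with the weights $(j+1)^{-a}$. The continuous parameter $a$ gives $\mathfrak c$ many functions with no combinatorics. A nontrivial combination $\sum_i\beta_i f_{a_i}$ is handled by asymptotic dominance: for large $j$, its coefficients are comparable to $\beta_1(j+1)^{-a_1}c_{j,k}$, with the smallest exponent $a_1$ dominating. To conclude that such a combination still has exponent exactly $\alpha$, the paper must pass the information $h_f\equiv\alpha$ through the Schauder characterization in both directions. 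This includes the converse (sufficiency) direction, together with absorbing the factor $(j+1)^{a_1}$ against the weight $(1+|2^jx_0-k|)^\beta$. The paper leaves this step implicit.

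Your construction is explicit and self-contained. It uses lacunary Takagi-type blocks at almost disjoint sets of scales, and it never needs Proposition~\ref{T2} or the converse direction of \eqref{schaudercaract}. You only use two things: membership in $C^\alpha[0,1]$, via the global characterization or the direct estimate of Lemma~\ref{lemma_constr}, and the elementary necessary direction. You verify the latter correctly via second differences, after reducing to $\beta<1$. The upper bound $h_f(x_0)\le\alpha$ then comes from infinitely many scales carrying a coefficient of full size $|\lambda_1|2^{-\alpha j}$ adjacent to $x_0$. Since these scales belong to $A_{t_1}$ alone, there is no cancellation or dominance to manage.

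The price of your route is the set-theoretic device of an almost disjoint family of size $\mathfrak c$. As you note, this is unavoidable here, because a genuine partition of the scales yields only countably many pieces. The binary-tree branch construction supplies the family. It is cleanest to index by branches in $\{0,1\}^{\bN}$ rather than by digits of real numbers, which sidesteps the issue of non-unique binary expansions.
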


\begin{proof}
Let $f \in C^{\alpha}[0,1]$ be such that $h_{f}(x) = \alpha$ for every $x \in [0,1]$, and denote by $c_{j,k}$ its Schauder coefficients. 

For each $a>0$, define
\[
f_a(x) := \sum_{j\in \bN_0} \sum_{k=0}^{2^j-1} \frac{1}{(j+1)^a} c_{j,k} \Lambda(2^j x - k).
\]
By the Schauder coefficient characterization of H\"older regularity, it yields that $f_a \in C^\alpha[0,1]$ with $h_{f_a}(x) = \alpha$ for all $x$, since its Schauder coefficients are exactly $\frac{1}{(j+1)^a} c_{j,k}$.

Now, consider any nontrivial finite linear combination of the functions $f_a$. 
Let $n \ge 1$, with parameters $a_n > \dots > a_1 > 0$ and nonzero scalars $\beta_1, \dots, \beta_n$, and define
\[
g := \sum_{i=1}^n \beta_i f_{a_i} = \sum_{j\in \bN_0} \sum_{k=0}^{2^j-1} \sum_{i=1}^n \frac{\beta_i}{(j+1)^{a_i}} c_{j,k} \Lambda(2^j x - k).
\]
The Schauder coefficients of $g$ are then
\[
d_{j,k} := \sum_{i=1}^n \frac{\beta_i}{(j+1)^{a_i}} c_{j,k}.
\]
For $j$ large enough, these coefficients satisfy
\[
\frac{|\beta_1|}{2 (j+1)^{a_1}} |c_{j,k}| \le |d_{j,k}| \le \frac{2|\beta_1|}{(j+1)^{a_1}} |c_{j,k}|,
\]
for all $k \in \{0,\dots,2^j-1\}$.
Thus, by the characterization of the H\"older exponent via Schauder coefficients, $g$ is not identically zero and satisfies $h_g(x) = \alpha$ for every $x \in [0,1]$.

This shows that the set of such functions is $\mathfrak{c}$-lineable.
\end{proof}

	Let us now turn to questions of algebrability. 
	The following results show that in any algebra of locally bounded functions, there are always functions with arbitrarily large pointwise regularity.  
	As a consequence, the set satisfying the hypotheses of Proposition~\ref{prop:lin} is not even $1$-algebrable. 
	
	\begin{proposition}
		Let $f : [0,1]\to \R$ be a locally bounded function. For every $x_0 \in [0,1]$ and $\alpha >0$, there exists a function in the algebra generated by $f$ with a H\"older exponent at $x_0$ larger than $\alpha$.
	\end{proposition}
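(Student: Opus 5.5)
The plan is to work with the functions $P(f)$, where $P$ is a polynomial without constant term. These are exactly the elements of the algebra generated by $f$. The key choice is
$$
P_n(t) := t\,\big(t - f(x_0)\big)^n, \qquad n \in \bN ,
$$
which has no constant term, so $P_n(f)$ belongs to the algebra generated by $f$ for every $n$. We also have $P_n(f)(x_0)=0$. The factor $(f-f(x_0))^n$ is meant to multiply the local oscillation of $f$ at $x_0$ by itself $n$ times. The leading factor $t$ is there only to remove the constant term that a bare power $(t-f(x_0))^n$ would produce.

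\emph{Main case: $h_f(x_0)>0$.} Suppose $f \in C^{\varepsilon}(x_0)$ for some $\varepsilon \in (0,1]$. Since $[\varepsilon]=0$ for $\varepsilon<1$, the approximating polynomial in \eqref{Hol} is the constant $f(x_0)$, so $|f(x)-f(x_0)|\le C|x-x_0|^{\varepsilon}$ near $x_0$. Local boundedness gives $M:=\sup_{|x-x_0|\le r}|f(x)|<\infty$. Then, near $x_0$,
$$
\big|P_n(f)(x) - P_n(f)(x_0)\big| = |f(x)|\,|f(x)-f(x_0)|^n \le M C^n |x-x_0|^{n\varepsilon}.
$$
When $n\varepsilon \le 1$, this gives $P_n(f)\in C^{n\varepsilon}(x_0)$ directly. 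The one thing to check is the definition when $n\varepsilon$ exceeds $1$, where \eqref{Hol} requires a polynomial $P_{x_0}$ of higher degree. Here the zero-order Taylor polynomial $P_{x_0}\equiv 0$ is admissible, since the bound above is already of order $|x-x_0|^{n\varepsilon}$. It follows that $h_{P_n(f)}(x_0)\ge n\varepsilon$, and choosing $n>\alpha/\varepsilon$ gives the claim.

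\emph{Degenerate case and main obstacle.} The delicate point is when $f$ has no positive Hölder regularity at $x_0$, for instance when $f$ is discontinuous there. In that case $f-f(x_0)$ need not tend to $0$, and no power of it gains regularity; the indicator of the rationals is an example. I do not expect a nonzero element to work in general. For the indicator of the rationals, $f$ takes only the values $0$ and $1$, so every $P(f)$ equals $P(1)f$. It is therefore either $0$ or a nonzero multiple of $f$, which is not continuous at any point. The statement is nonetheless satisfied in this case by the zero polynomial: $0$ belongs to the algebra generated by $f$, and $h_0(x_0)=+\infty>\alpha$.

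In the intended application, $f \in C^{\alpha}[0,1]$ with $h_f\equiv\alpha>0$, so the main case applies. The substantive content is then that $P_n(f)$ is a genuinely nonzero element whose Hölder exponent at $x_0$ is at least $n\alpha$. Consequently no algebra of such functions can have Hölder exponent identically equal to $\alpha$, which is exactly what is needed to conclude that the set of Proposition~\ref{prop:lin} is not $1$-algebrable.
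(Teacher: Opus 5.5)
Your argument is correct and is essentially the paper's: the paper kills the value at $x_0$ with $g=f^2-f(x_0)f$ (or $g=f$ if $f(x_0)=0$) and then takes powers $g^n$, which is the same mechanism as your $P_n(f)=f\,(f-f(x_0))^n$. Your case split improves on the paper. Its claim that $h_{g^n}(x_0)$ becomes arbitrarily large tacitly requires $h_f(x_0)>0$, and this fails for general locally bounded $f$: for $f=\chi_{\mathbb{Q}}$ at an irrational $x_0$, $g^n=f$ has Hölder exponent $0$. So, exactly as you observe, for such $f$ the literal statement holds only through the zero function.
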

	
	\begin{proof}
			Let $x_0$ be an arbitrary point in $[0,1]$. Then, there exists a function $g$ in the algebra generated by $f$ such that $g(x_0) =0$.
		Indeed, if it is not the case for $f$, it suffices to consider the function $g(x) = f(x)^2 -f(x_0)f(x)$. For $n\in \bN$ large enough, the H\"older exponent of $g^n$ at $x_0$ will be  arbitrarily  large.
		Hence, for $n\in \bN$ large enough, we have $h_{g^n}(x_0) \geq \alpha$. 
	\end{proof}

	\begin{corollary}
		The set of functions $f \in C^{\alpha}[0,1]$ such that $h_{f}(x) = \alpha$ for every
		$x \in [0,1]$ is not $1$-algebrable.
	\end{corollary}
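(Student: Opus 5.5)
The corollary should follow directly from the preceding proposition, arguing by contradiction. Suppose the set $\mathcal{M}$ of functions $f \in C^{\alpha}[0,1]$ with $h_f(x)=\alpha$ for every $x\in[0,1]$ were $1$-algebrable. Then $\mathcal{M}\cup\{0\}$ would contain a subalgebra $\mathcal{C}$ of $C^{\alpha}[0,1]$ generated by a single nonzero element $f$. Since $f\neq 0$ lies in $\mathcal{C}\subseteq \mathcal{M}\cup\{0\}$, we have $f\in\mathcal{M}$. In particular $f$ is continuous on $[0,1]$, hence locally bounded.

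The plan is to feed $f$ into the previous proposition. Fix any $x_0\in[0,1]$ and any $\beta>\alpha$. The proposition gives an element $G$ of the algebra generated by $f$, hence of $\mathcal{C}$, with $h_G(x_0)>\beta>\alpha$. In the proof, $G=g^n$ where either $g=f$ (if $f(x_0)=0$) or $g=f^2-f(x_0)f$. In both cases $g$ vanishes at $x_0$ and lies in $C^\alpha(x_0)$, so $|g^n(x)|\le C^n|x-x_0|^{n\alpha}$ near $x_0$. This gives $h_G(x_0)\ge n\alpha$, which is the precise content behind ``arbitrarily large''.

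Since $\mathcal{C}\subseteq\mathcal{M}\cup\{0\}$, either $G\in\mathcal{M}$, which forces $h_G(x_0)=\alpha$ and contradicts $h_G(x_0)>\alpha$, or $G=0$. The main obstacle is therefore to rule out $G=0$, because the zero function has infinite Hölder exponent and is allowed in the algebra.

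To handle this I would exploit the freedom in the construction. Suppose $f(x_0)=0$. Then $G=f^n$, and $f^n\equiv 0$ would force $f\equiv 0$, which is impossible. Otherwise $g=f(f-f(x_0))$, and $g^n\equiv0$ would give $g\equiv 0$. Then $f$ would take only the two values $0$ and $f(x_0)$, so by continuity and connectedness $f$ would be constant, equal to $f(x_0)$. A nonzero constant has $h_f=+\infty\neq\alpha$, contradicting $f\in\mathcal{M}$. So $G\neq 0$, and thus $G\in\mathcal{C}\setminus\mathcal{M}$ with $G \neq 0$, contradicting $\mathcal{C}\subseteq\mathcal{M}\cup\{0\}$. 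This proves that $\mathcal{M}$ is not $1$-algebrable. The same argument also shows that no nontrivial algebra at all fits inside $\mathcal{M}\cup\{0\}$.
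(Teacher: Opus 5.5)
Your proof is correct and follows the paper's route: the corollary is deduced from the preceding proposition by taking $g^n$ with $g\in\{f,\ f^2-f(x_0)f\}$ vanishing at $x_0$. You also supply two points the paper leaves implicit. First, you check that $g^n\neq 0$, which is needed because otherwise the zero function, with infinite exponent, would make the proposition vacuous. Second, you derive $h_{g^n}(x_0)\ge n\alpha$ from $f\in C^{\alpha}[0,1]$ with $\alpha>0$, which mere local boundedness of $f$ would not give.
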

	
	\begin{remark}
		One can conclude from the latter proof that the pointwise regularity will considerably be modified if the level sets $f^{-1}(\{y\})$ of $f$ are ``large''.
		 In \cite[Section~3]{LiZhSh}, Liu et al. analyze level sets in the context of Hölder regularity, but their focus differs from ours. 
		 While they study generic properties of level sets for prevalent functions, we explicitly construct functions (Lemma \ref{lemma_constr}) with level sets of Hausdorff dimension zero, ensuring minimal overlap and preserving regularity under algebraic operations. 
		 This complements their results by providing concrete examples with controlled level sets.
		 But one can hope to get a modification of the H\"older pointwise regularity only on a small subset of $[0,1]$ by controlling the level sets of $f$.
	\end{remark}

	The following lemma is inspired by some constructions presented in \cite{KK09}.
	\begin{lemma} \label{lemma_constr}
		Assume that $\alpha \in  (0,1)$. There exists a function $F \in C^{\alpha}[0,1]$ strongly monoH\"older with exponent  $\alpha$ on $[0,1]$ and whose level sets satisfy $\dim_{\mathcal{H}}F^{-1}(\{y\})=0$ for every $y \in \R$. 
	\end{lemma}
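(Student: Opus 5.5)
Following the hint that the construction is inspired by \cite{KK09}, I will build $F$ as a self-affine (de Rham / Kiesswetter-type) function defined by an iterated replacement rule on a $b$-adic grid, and check the three required properties from the self-similarity. I fix an integer base $b$ (say $b=4$ or larger) and a ``generator'': a piecewise linear map on $[0,1]$ with $b$ pieces, mapping $0\mapsto 0$, $1\mapsto 1$, whose slopes on the pieces are $\pm b^{1-\alpha}$ up to a factor, i.e.\ each piece has vertical increment $\varepsilon_i b^{-\alpha}$ with signs $\varepsilon_i\in\{\pm1\}$ and $\sum_i \varepsilon_i b^{-\alpha}=1$. This is not possible in general for arbitrary $\alpha$ with equal magnitudes, so I will allow increments $\varepsilon_i r_i$ with $|r_i|$ comparable to $b^{-\alpha}$, choosing $b$ large and then the $r_i$ so that the sum is $1$. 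Then $F$ is the uniform limit of the iterates: on each $b$-adic interval of generation $n$, $F$ increases by a product $\prod \varepsilon r$, of modulus $\asymp b^{-\alpha n}$, and the graph over that interval is an affine copy of the whole graph. Continuity and uniform convergence follow from $\max|r_i|<1$.

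\emph{Hölder upper bound and strong monoHölder property.} For $x,y$ with $b^{-n-1}<|x-y|\le b^{-n}$, the two points lie in at most two adjacent generation-$n$ intervals, on each of which the oscillation of $F$ is $\le C\,b^{-\alpha n}$ by self-affinity (the oscillation of the whole graph is finite); this gives $F\in C^\alpha[0,1]$. For Proposition~\ref{prop_dimgraph}(ii), given $x$ and $\delta\in(b^{-n-1},b^{-n}]$, I take the generation-$(n+2)$ interval $J$ containing $x$; it has length $\le\delta$ and increment of modulus $\ge c\,b^{-\alpha(n+2)}$, so one of its endpoints $y$ satisfies $|F(x)-F(y)|\ge \tfrac c2 b^{-2\alpha}\delta^\alpha$. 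Here I need all $|r_i|\ge c\,b^{-\alpha}$ uniformly, which the choice of generator guarantees.

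\emph{Level sets of dimension zero.} This is the main obstacle. The idea: fix $y$ and count, at generation $n$, the number $N_n(y)$ of generation-$n$ intervals whose image under $F$ contains $y$; then $F^{-1}(\{y\})$ is covered by $N_n(y)$ intervals of length $b^{-n}$, and it suffices to show $N_n(y)\le C_\eta b^{\eta n}$ for every $\eta>0$ (or $N_n(y)$ polynomial in $n$). Within a generation-$n$ interval $J$ hit by $y$, the image $F(J)$ has height $\asymp b^{-\alpha n}$, and the children of $J$ are an affine copy of the generator, each child's image having height $\asymp b^{-\alpha(n+1)}$. I will design the generator so that its graph is ``almost monotone'': the image ranges $F(J_i)$ of the $b$ children overlap only boundedly, with any horizontal line meeting at most a bounded number $K$ of children, and crucially, that among children meeting a given level only one can meet it ``deeply'' while the others meet it only near their extreme values, where they are in turn monotone-like. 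Concretely I would choose a generator with only one sign change (increase, short decrease, increase) so the overlap region is a single band, and a line through it meets at most $3$ children; then iterating, the branching is controlled. A naive bound gives $N_n\le 3^n$, i.e.\ dimension $\le \log 3/\log b$, which tends to $0$ as $b\to\infty$ but is not $0$. To get exactly $0$ I would instead let the construction vary with the generation: use base $b_n\to\infty$ at generation $n$ (with $\prod b_k$ growing so that the Hölder estimates above still hold with constants, requiring $b_n$ to grow slowly, e.g.\ $b_n\asymp n$), keeping a bounded number $3$ of branches per level crossing. Then $N_n(y)\le 3^n$ while the interval lengths are $\prod_{k\le n} b_k^{-1}$, giving $\dim_{\mathcal H}F^{-1}(\{y\})\le \liminf n\log 3/\sum_{k\le n}\log b_k=0$. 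The delicate part is re-verifying the strong monoHölder estimates with variable bases: the ratio $b_{n+1}$ between consecutive scales must not spoil the comparability of $\delta^\alpha$ with the increments, so I would pick scales $\delta$ and intermediate subdivisions carefully (using that each generator piece is itself linear-ish at scale within a factor $b_{n+1}$), possibly refining each variable-base step into a composition of fixed-base-like steps to keep constants uniform.
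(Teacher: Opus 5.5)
The level-set step fails. The bounded-branching property it relies on is incompatible with the two properties you are trying to build in, (i) and (ii). Let $J$ be a small interval of length $L$ cut into $b$ equal pieces $J_i$. Condition (ii) of Proposition~\ref{prop_dimgraph}, applied at the midpoint of $J_i$ with $\delta=L/(2b)$, gives $|F(J_i)|\ge c\,2^{-\alpha}(L/b)^{\alpha}$. Condition (i) gives $F(J_i)\subseteq F(J)$ with $|F(J)|\le CL^{\alpha}$. Hence
\[
\int_{\R}\#\{i : y\in F(J_i)\}\,dy=\sum_{i=1}^{b}|F(J_i)|\ge c\,2^{-\alpha}\,b^{1-\alpha}L^{\alpha},
\]
so some horizontal line meets at least $c\,2^{-\alpha}C^{-1}b^{1-\alpha}$ of the children. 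No generator can give ``at most $3$'' once $b$ is large, and letting $b_n\to\infty$ makes this worse, not better. Your one-sign-change picture confuses the skeleton polygon with the images of the children. With steps $\pm\rho$, $\rho\approx b^{-\alpha}$, and net increment $1$, about $(b-b^{\alpha})/2$ of the steps go down, so the decrease is not short. More importantly, child $i$ has image $F(x_{i-1})+\varepsilon_i r_i F([0,1])$, of height $|r_i|\,|F([0,1])|$ rather than $|r_i|$. So overlaps are governed by $\sum_i|r_i|\gtrsim b^{1-\alpha}$, not by how often a line crosses the polygon. Feeding the true multiplicities $K_k\gtrsim b_k^{1-\alpha}$ into $N_n(y)\le\prod_{k\le n}K_k$ gives nothing better than roughly $\dim_{\mathcal H}F^{-1}(\{y\})\le 1-\alpha$. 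Refining into fixed-base steps does not help either: it gives a ratio $\log K/\log b$ of order $1-\alpha$ again. Reaching $0$ would require a genuinely $y$-dependent argument showing that each fixed level meets high-multiplicity nodes only rarely along its tree, and the proposal contains nothing of that kind.

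There are two further problems.
\begin{itemize}
\item With equal-length pieces, moduli $|r_i|$ merely comparable to $b^{-\alpha}$ do not give generation-$n$ increments $\asymp b^{-\alpha n}$. Products of $n$ factors range from $(\min_i|r_i|)^n$ to $(\max_i|r_i|)^n$, so $F$ leaves $C^{\alpha}$ if some $|r_i|>b^{-\alpha}$, and violates (ii) near the point with constant address $i$ if some $|r_i|<b^{-\alpha}$. Equal moduli force $b^{\alpha}\in\bN$. You would therefore need unequal lengths $\ell_i$ with $|r_i|=\ell_i^{\alpha}$, or generation-dependent generators whose cumulative rounding error stays bounded.
\item With $b_n\to\infty$, your check of (ii) loses a factor $b_{n+1}^{-\alpha}$ when $\delta$ is close to the generation-$n$ length. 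Each generator would then itself have to be monoH\"older at every window size up to $b_{n+1}$, which brings back the multiplicity count above.
\end{itemize}

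For comparison, the paper uses no branching counts. It takes the lacunary Schauder series with coefficients $2^{-\alpha j_n}$ only at levels $j_n$. On each dyadic interval of length $2^{-j_n-1}$, the layers up to $j_n$ make $F$ affine with slope of modulus $>\frac12 2^{(1-\alpha)j_n}$ (by \eqref{cond1}), up to an error $4\cdot 2^{-\alpha j_{n+1}}$. A level set therefore meets such an interval in a set of diameter at most $8\cdot 2^{-\alpha j_{n+1}+(\alpha-1)j_n}$, and \eqref{cond2} makes every $\mathcal H^{r}$, $r>0$, vanish.

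Be aware, though, that this near-linearity is exactly what endangers (ii). For $\delta=2^{-(j_n+j_{n+1})/2}$, that $F$ oscillates by only $o(\delta^{\alpha})$ on every interval of length $\delta$. The paper checks (ii) only for $\delta\in[2^{-j_n-1},2^{-j_n})$. Making (ii) hold at all scales while keeping level sets thin is the real crux, and neither argument as written settles it.
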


	\begin{proof}
		We will construct $F$ via its coefficients in the Schauder basis. First, let us consider a sequence $(r_n)_{n \in \mathbb N}$ of strictly positive real numbers which decreases to $0$. By recurrence, we construct a strictly increasing sequence $(j_n)_{n \in \mathbb N_0}$ of numbers in $\mathbb N_0$ as follows: we fix $j_0 = 0$ and, assuming that $j_l$ has been constructed for every $l < n$, we choose $j_{n}$ large enough so that
		\begin{equation} \label{cond1}
		\sum_{l<n} 2^{(1-\alpha) j_l} < \frac{1}{2} 2^{(1-\alpha) j_n} 
		\end{equation}
		but also
		\begin{equation} \label{cond3}
		\alpha j_n \geq \alpha j_{n-1} + 5  
		\end{equation}
		and
		\begin{equation} \label{cond2}
		\alpha r_{n-1} j_n >\big(1 - r_{n-1} (1- \alpha ) \big)  j_{n-1} +n \, .
		\end{equation}
		Then, we set
		$$
		c_{j,k} = \left\{
		\begin{array}{ll}
			2^{-\alpha j} & \mbox{if there exists $n \in \bN_0$ such that } j = j_n, \\[1ex]
			0 & \mbox{otherwise}, 
		\end{array}
		\right. 
		$$
		and we consider the function $F$ defined by
		$$
		F(x) = \sum_{j \in \bN_0} \sum_{k=0}^{2^j -1} c_{j,k} \Lambda (2^j x-k) \, .
		$$
		From the definition of the coefficients $c_{j,k}$ and the localization of the support of $\Lambda$, it is clear that this series is uniformly convergent on $[0,1]$, hence $F$ is well defined. Let us first show, using standard arguments, that $F$ belongs to $C^\alpha [0,1]$. Let us fix $x,y \in [0,1]$ distinct and let us consider $J \in \mathbb N_0$ such that 
		\begin{equation}\label{eq:choiceJ}
		2^{-J-1}< |x-y| \leq 2^{-J}.
		\end{equation}
		We have
		\begin{align} \label{eq:holder}
			\left| F(x) - F(y)\right| \leq  & \left| \sum_{j < J} \sum_{k=0}^{2^j -1} c_{j,k} \big(\Lambda (2^j x-k) - \Lambda(2^j y -k) \big )\right| \\ 
			& +  \left| \sum_{j \geq J}  \sum_{k=0}^{2^j -1} c_{j,k} \Lambda (2^j x-k) \right| + \left| \sum_{j \geq J} \sum_{k=0}^{2^j -1} c_{j,k} \Lambda (2^j y-k) \right|.\nonumber
		\end{align}
		Let us estimate the first term of (\ref{eq:holder}). 
		Since $\Lambda$ is $1$-Lipschitz, for any $j\in\mathbb{N}_0$ and $k\in \{ 0,\ldots,2^j-1 \}$, we have
	\[
	|\Lambda(2^j x - k) - \Lambda(2^j y - k)| \le 2^j |x-y|.
	\]
	Moreover, for each fixed $j\in\mathbb{N}_0$, there are at most two values of $k\in \{ 0,\ldots,2^j-1 \}$ for which $\Lambda(2^j x - k)$ or $\Lambda(2^j y - k)$ is nonzero, since the support of $\Lambda(2^j \cdot - k)$ has length $2^{-j}$. Therefore, the first term in \eqref{eq:holder} can be bounded as
	\begin{align}\label{eq:holder1}
	\left| \sum_{j < J} \sum_{k=0}^{2^j -1} c_{j,k} (\Lambda(2^j x - k) - \Lambda(2^j y - k)) \right|
	& \le \sum_{j < J} 2 \, 2^{-\alpha j} 2^j |x-y| \nonumber \\
	&  \le C 2^{(1-\alpha)J} |x-y| \nonumber\\
	& <  2^{\alpha }C |x-y|^{\alpha}
	\end{align}
	for $C=\frac{2}{2^{1-\alpha}-1} >0$ is a constant independent of $x$ and $y$, and where we have used \eqref{eq:choiceJ}.
	Indeed, firstly
		$$
		\sum_{j<J} 2^{(1-\alpha)j} = \frac{2^{(1-\alpha)J}-1}{2^{(1-\alpha)}-1} \leq \frac{2^{(1-\alpha)J}}{2^{1-\alpha}-1} 
		$$
	and secondly, by \eqref{eq:choiceJ},
		$$
		2^{(1-\alpha)J} |x-y| = 2^{J} 2^{-\alpha J} |x-y| < \frac{1}{|x-y|} 2^\alpha |x-y|^\alpha |x-y| = 2^\alpha |x-y|^\alpha.
		$$
	 For the second term of (\ref{eq:holder}), using again \eqref{eq:choiceJ}, we obtain
		\begin{align}\label{eq:holder2}
		\left| \sum_{j \geq J} \sum_{k=0}^{2^j -1} c_{j,k} \Lambda (2^j x-k) \right| & \leq \sum_{j \geq J} 2^{-\alpha j} = \frac{2^{-\alpha J}}{1-2^{-\alpha}} \\
		& \leq \frac{2^\alpha}{1-2^{-\alpha}} |x-y|^\alpha = C' |x-y|^{\alpha}
		\end{align}
		for $C'=\frac{2^\alpha}{1-2^{-\alpha}}>0$ a constant independent of $x$ and $y$. 
		Analogously, the third term of (\ref{eq:holder}) can be estimated by
		\begin{equation}\label{eq:holder3}
			\left| \sum_{j \geq J} \sum_{k=0}^{2^j -1} c_{j,k} \Lambda (2^j y-k) \right|  \leq C' |x-y|^{\alpha}.
		\end{equation}
		Putting together (\ref{eq:holder}), (\ref{eq:holder1}), (\ref{eq:holder2}) and (\ref{eq:holder3}), we obtain that $F \in C^{\alpha} [0,1]$. 
		
		\medskip
		Let us now prove that the function $F$ satisfies the second assumption of Proposition~\ref{prop_dimgraph}. Let us fix $0<\delta <1$ and $x \in [0,1)$ (with obvious adaptation if $x=1$). Then, there is $n \in \mathbb N_0$ such that
		\begin{equation}\label{eq:delta}
			2^{-j_n-1} \leq \delta < 2^{-j_n}
		\end{equation}
		and $k \in \{0, \dots, 2^{j_n}-1\}$ such that $x \in \left[ \frac{k}{2^{j_n}}, \frac{k+1}{2^{j_n}}\right)$. Consider now $y \in \left[ \frac{k}{2^{j_n}}, \frac{k+1}{2^{j_n}}\right)$ such that
		\begin{equation}\label{eq:dist}
			|x-y| \geq 2^{-j_n-1}. 
		\end{equation}
		Since $x$ and $y$ belongs to the same dyadic interval of length $2^{-j_n}$, we have
		\begin{eqnarray}\label{eq:exp1}
			\left|  \sum_{j < j_n}\sum_{k=0}^{2^j -1} c_{j,k} \big(\Lambda (2^j x-k) - \Lambda(2^j y -k) \big )\right| & \leq & 
			\sum_{l<n} 2^{(1-\alpha) j_l}   |x-y| \nonumber \\
			& < & \frac{1}{2} 2^{(1-\alpha) j_n} |x-y|
		\end{eqnarray}
		by using condition (\ref{cond1}). Moreover, again since $x$ and $y$ belong to the same dyadic interval $\left[ \frac{k}{2^{j_n}}, \frac{k+1}{2^{j_n}}\right)$ (which in particular it means that $x$ and $y$ are either in $[0,1/2)$ or in $[1/2,1)$), we have
		\begin{align}\label{eq:exp2}
			\left|  \sum_{k=0}^{2^{j_n} -1} c_{j,k} \big(\Lambda (2^{j_n} x-k) - \Lambda(2^{j_n} y -k) \big )\right| & = \left|  c_{j,k} \big(\Lambda (2^{j_n} x-k) - \Lambda(2^{j_n} y -k) \big )\right| \nonumber \\
			& = c_{j,k} 2^{j_n} |x-y| \nonumber \\
			& = 2^{(1-\alpha) j_n} |x-y|.
		\end{align}
		Thus, from \eqref{eq:exp1} and \eqref{eq:exp2}, we obtain
		\begin{equation}\label{eq:exp3new}
		\left|  \sum_{j \leq j_n}\sum_{k=0}^{2^j -1} c_{j,k} \big(\Lambda (2^j x-k) - \Lambda(2^j y -k) \big )\right| > \frac{1}{2} 2^{(1-\alpha) j_n} |x-y| \geq \frac{1}{2} 2^{-\alpha j_n} |x-y|.
		\end{equation}
		Now, by applying induction on \eqref{cond3}, we have that $\alpha j_{l+k} \geq \alpha j_l+5k$ for any $l\in \bN_0$ and $k\in \bN$, which yields
			$$
			2^{-\alpha j_{l+k}} \leq 2^{-\alpha j_l} \cdot 2^{-5k}.
			$$
		Hence,
		\begin{align}\label{eq:exp3}
			\left|  \sum_{j > j_n}\sum_{k=0}^{2^j -1} c_{j,k} \big(\Lambda (2^j x-k) - \Lambda(2^j y -k) \big )\right| & \leq 2 \cdot \sum_{l>n} 2^{-\alpha j_l} \leq 2\cdot 2^{-\alpha j_{n+1}} \cdot \sum_{k\in \bN_0} 2^{-5k} \nonumber \\
			& = 2\cdot 2^{-\alpha j_{n+1}} \cdot \frac{1}{1-2^{-5}} = 2\cdot \frac{32}{31} \cdot 2^{-\alpha j_{n+1}} \nonumber \\
			& < 4 \cdot 2^{-\alpha j_{n+1}}.
		\end{align}
		We obtain from \eqref{eq:exp3new} and \eqref{eq:exp3} that
		\begin{equation}\label{eq:difference}
			\big| F(x) - F(y) \big|  > \frac{1}{2} 2^{(1-\alpha) j_n} |x-y| - 4 \cdot 2^{-\alpha j_{n+1}} .
		\end{equation}
		By using \eqref{eq:delta} and \eqref{eq:dist} with the condition \eqref{cond3}, we obtain then
		$$
		\big| F(x) - F(y) \big|  > \frac{1}{2}2^{- \alpha j_{n} } - 4 \cdot 2^{- \alpha j_{n+1}} > \frac{1}{2}\delta^{\alpha} - 4 \cdot 2^{- \alpha j_n} \cdot 2^{-5} > \frac{1}{2}\delta^{\alpha} - \frac{1}{8} \delta^{\alpha} = \frac{3}{8}\delta^{\alpha}.
		$$
		Hence, $F$ is strongly monoH\"older on $[0,1]$. 
		
		\medskip
		
		In order to conclude, we still have to prove that the level sets of $F$ have Hausdorff dimension zero. Let $E$ denote a set on which $F$ is constant, and let us fix $r >0$. For every $n \in \bN$, we have
		$$
		E = \bigcup_{k=0}^{2^{j_n+1}-1} E_{n,k}  \quad \mbox{where} \quad E_{n,k} = \left[ \frac{k}{2^{j_n+1}}, \frac{k+1}{2^{j_n+1}}\right) \cap E.
		$$
		Since $\diam(E_{n,k}) \leq  \frac{1}{2^{j_n+1}} $, if we show that
		$$
		\sum_{k=0}^{2^{j_n+1}-1} \diam(E_{n,k}) ^r  \longrightarrow 0 \quad \mbox{as} \quad  n \to \infty ,
		$$
		then we will get that $\mathcal{H}^r (E) =0$. Let us consider $x,y \in E_{n,k}$.  From \eqref{eq:difference}, we already know that 
		$$
		0 = \big| F(x) - F(y) \big|  \geq \frac{1}{2} 2^{(1-\alpha) j_n} |x-y| - 4 \cdot 2^{-\alpha j_{n+1}} ,
		$$
		hence
		$$
		|x-y| \leq 8 \cdot  2^{-\alpha j_{n+1} +(\alpha- 1) j_n}.
		$$
		We obtain then that
		$$
		\sum_{k=0}^{2^{j_n+1}-1} \diam(E_{n,k}) ^r  \leq 8^r \cdot 2 \cdot 2^{-r \alpha j_{n+1} +\left(1-r (1- \alpha) \right)j_n} .
		$$
		The sequence $( 2^{-r \alpha j_{n+1} +(1  - r (1- \alpha)) j_n})_{n \in \bN}$ converges to $0$ as $n$ tends to infinity.
		Indeed, for $n$ large enough, we have $r_n \leq r$ and using condition (\ref{cond2}), we obtain
		$$
		r \alpha j_{n+1} \geq \alpha r_{n} j_{n+1} >\big(1-r_{n} (1- \alpha )\big)  j_{n} +n+1  > \big(1-r (1- \alpha )\big) j_{n} +n +1 \, .
		$$
		The conclusion follows.
	\end{proof}

	\begin{remark}In particular, $h_F(x_0) = \alpha$ for every
		$x_0 \in [0,1]$ and  ${\dim}_B (F) =  2-
		\alpha$. Indeed, it is directly deduced from  the characterization
		of the H\"older regularity in the Schauder basis and Proposition \ref{prop_dimgraph}.
	\end{remark}

	Let us  now present the main result of this section.

	\begin{theorem}\label{thm:alg}
		Let $\alpha \in  (0,1)$. The set of functions $f \in C^{\alpha}[0,1]$ with $\dim_{B}(f) = 2- \alpha$  and for which there exists $E \subseteq [0,1]$ such that $\dim_{\mathcal{H}}(E) = 0$ and $h_f(x_0)= \alpha$ for every $x_0 \in [0,1] \setminus E$ is strongly $\mathfrak{c}$-algebrable.
	\end{theorem}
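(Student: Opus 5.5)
The plan is to apply Proposition~\ref{prop_explike} with $F$ the function from Lemma~\ref{lemma_constr}. It then suffices to show that for every exponential-like function $g$, the composition $g\circ F$ is nonzero, lies in $C^\alpha[0,1]$, has $\dim_B(g\circ F)=2-\alpha$, and satisfies $h_{g\circ F}(x_0)=\alpha$ outside a set of Hausdorff dimension zero.

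\emph{Membership in $C^\alpha[0,1]$ and nonvanishing.} Since $F$ is bounded and $g$ is smooth, $g'$ is bounded on $F([0,1])$. Hence $g\circ F\in C^\alpha[0,1]$, exactly as in the proof of Theorem~\ref{thm:alggraph}. Moreover, $g$ has finitely many zeros by Lemma~\ref{lemma_explike}, and $F$ is nonconstant. So $g\circ F$ is not identically zero; otherwise $F$ would take only finitely many values, contradicting continuity.

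\emph{The exceptional set.} By Lemma~\ref{lemma_explike}, $g'$ (a nonzero exponential-like function, or a constant if $m=1$) has finitely many zeros $t_1,\dots,t_p$. Set
\[
E:=\bigcup_{i=1}^p F^{-1}(\{t_i\}).
\]
By Lemma~\ref{lemma_constr} each $F^{-1}(\{t_i\})$ has Hausdorff dimension $0$, so the finite union $E$ also satisfies $\dim_{\mathcal H}(E)=0$. Fix $x_0\notin E$. Then $g'(F(x_0))\neq0$, and by continuity there are $c>0$ and a neighbourhood $U$ of $x_0$ with $|g'|\ge c$ on the interval between $F(x)$ and $F(x_0)$ for all $x\in U$. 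The mean value theorem gives
\[
c|F(x)-F(x_0)|\le |g(F(x))-g(F(x_0))|\le \|g'\|_\infty |F(x)-F(x_0)|.
\]
Since $h_F(x_0)=\alpha<1$ (the remark after Lemma~\ref{lemma_constr}), the liminf formula for the Hölder exponent yields $h_{g\circ F}(x_0)=h_F(x_0)=\alpha$.

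\emph{Box dimension.} The upper bound $\overline{\dim}_B(g\circ F)\le 2-\alpha$ follows from Proposition~\ref{prop_dimgraph}(i). For the lower bound, I would not try to check condition (ii) globally, since it can fail near points of $E$. Instead, choose a nondegenerate interval $I$ on which $|g'\circ F|\ge c>0$; it exists because $E$ is a closed set of dimension $0$, hence nowhere dense. On $I$, the strong monoHölder property of $F$ gives, for $x\in I$ and small $\delta$, a point $y$ with $|x-y|\le\delta$ and $|F(x)-F(y)|\ge \tfrac38\delta^\alpha$.

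The main obstacle is that this $y$ must stay inside $I$. I expect to handle it by reusing the proof of Lemma~\ref{lemma_constr}, where $y$ is chosen in the same dyadic interval of length $2^{-j_n}$ as $x$; for small $\delta$ and $I$ a union of such dyadic intervals, $y\in I$ holds automatically. Then $|g(F(x))-g(F(y))|\ge \tfrac38 c\,\delta^\alpha$, so $(g\circ F)|_I$ satisfies condition (ii) of Proposition~\ref{prop_dimgraph}. Therefore
\[
\underline{\dim}_B(g\circ F)\ge\underline{\dim}_B\bigl((g\circ F)|_I\bigr)\ge 2-\alpha,
\]
using monotonicity of box dimension under inclusion. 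This gives $\dim_B(g\circ F)=2-\alpha$ and concludes the argument.
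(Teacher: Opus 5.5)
Your plan coincides with the paper's proof: Proposition~\ref{prop_explike} applied to the $F$ of Lemma~\ref{lemma_constr}, the mean value theorem for $g\circ F\in C^\alpha[0,1]$ and for $h_{g\circ F}(x_0)=\alpha$ off $E=F^{-1}\bigl((g')^{-1}(\{0\})\bigr)$, and then an interval $I$ away from $E$ for the box dimension. Your nonvanishing, exceptional-set and pointwise-exponent arguments are fine; for $h_F\equiv\alpha$ one only needs the scales $2^{-j_n}$.

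The gap is at the step you flag as the main obstacle, and it is more serious than keeping $y$ inside $I$. The dyadic argument in the proof of Lemma~\ref{lemma_constr} only handles $\delta$ with $2^{-j_n-1}\le\delta<2^{-j_n}$ for some $n$. These bands do not cover any interval $(0,\delta_0]$, and in fact condition (ii) fails for $F$. Condition~\eqref{cond2} forces $j_{n+1}/j_n\to\infty$. Each level-$j$ block $\sum_k\Lambda(2^j\cdot-k)$ is $2^j$-Lipschitz with values in $[0,\tfrac12]$, so \eqref{cond1} and \eqref{cond3} give, for every $n$ and all $x,y$ with $|x-y|\le\delta$,
\[
|F(x)-F(y)|\le \tfrac32\,2^{(1-\alpha)j_n}\,\delta+2^{-\alpha j_{n+1}}.
\]
This is $o(\delta^\alpha)$ along $\delta=2^{-(j_n+j_{n+1})/2}$. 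Covering the graph by columns of width $\delta=2^{-\lfloor\alpha j_{n+1}\rfloor}$ then gives $N_\delta(F)\lesssim\delta^{-1}2^{(1-\alpha)j_n}$, hence $\underline{\dim}_B(F)=1$. Since $(x,t)\mapsto(x,g(t))$ is Lipschitz on $[0,1]\times F([0,1])$, also $\underline{\dim}_B(g\circ F)=1<2-\alpha=\overline{\dim}_B(g\circ F)$. So $\dim_B(g\circ F)$ does not exist, and no choice of $I$ yields the lower bound.

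This defect is inherited from the paper. Its proof of the theorem simply asserts that $F$ is strongly monoH\"older on $I$, and the proof of Lemma~\ref{lemma_constr} has the same band restriction. That proof also takes $y$ anywhere in the dyadic interval of length $2^{-j_n}$, although $\Lambda(2^{j_n}\cdot-k)$ changes slope at the midpoint, so \eqref{eq:exp2} needs $y$ in the same half.

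What your argument actually proves is $g\circ F\in C^\alpha[0,1]$, $\overline{\dim}_B(g\circ F)=2-\alpha$, and the pointwise statement off a set of Hausdorff dimension zero. To obtain a genuine box dimension you would need an $F$ satisfying (ii) at every scale. In a construction of this type that forces $\sup_n(j_{n+1}-j_n)<\infty$. But then the level-set estimate, which needs $r\alpha j_{n+1}-(1-r(1-\alpha))j_n$ to be unbounded above for every $r>0$, no longer gives dimension $0$, so a genuinely different construction is required.

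Once a valid (ii) on $[0,1]$ is available, your concern about $y$ leaving $I$ is most easily met without re-entering the lemma's proof. Take $I$ compactly inside an interval $I'$ with $\inf_{I'}|g'\circ F|>0$. For small $\delta$ the point $y$ then lies in $I'$, and the mean value theorem applies because $F(I')$ is an interval.
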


	\begin{proof}[Proof of Theorem \ref{thm:alg}]
		We will use the technique described in Proposition \ref{prop_explike}. Let us consider the function $F$ constructed in Lemma \ref{lemma_constr} and any exponential-like function $g$. Let us also fix $x_0 \in [0,1]$. For every $x \in [0,1]$, we have
		\begin{equation}\label{eq:holdercompo}
			g \circ F(x) - g \circ F (x_0) = g'(t) \big( F(x) -F(x_0) \big)
		\end{equation}
		for some $t$ between $F(x)$ and $F(x_0)$. In particular, $g \circ F
		\in C^\alpha [0,1]$. Moreover, if $g'(F(x_0)) \neq 0$, one directly gets that
		$$
		h_{g \circ F}(x_0) = h_F(x_0) = \alpha.
		$$
		Let us note that the derivative $g'$ of $g$ is also an exponential-like function. Therefore, using Lemma  \ref{lemma_explike}, we know that its preimage $(g')^{-1}(\{0\})$ has finitely many elements. From the construction of $F$, we get that the set $E$ of points $x_0$ for which $g'(F(x_0)) = 0$ has Hausdorff dimension $0$.

		For the computation of the graph dimension, we already know that 
		$$
		\overline{\dim}_B (g \circ F) \leq 2- \alpha 
		$$
		using the first  part of Proposition \ref{prop_dimgraph}. Moreover,
		the continuity of $g'$ and $F$ together with the definition of $E$
		give the existence of an interval $I \subseteq [0,1] \setminus E$ such that
		$$
		\inf_{u \in I} |g'(F(u))| > 0.
		$$
		Then, since $F$ is strongly monoH\"older on $I$ with exponent
		$\alpha$, \eqref{eq:holdercompo} implies that the function $g \circ F$ is also strongly monoH\"older on $I$ with exponent $\alpha$. In order to conclude, it suffices to remark that 
		$$
		\underline{\dim}_B (g \circ F) \geq \underline{\dim}_B (g \circ F \vert_{I}).
		$$
		and to use the second part of Proposition \ref{prop_dimgraph}. 
	\end{proof}

	As a consequence of the proof of Theorem~\ref{thm:alg} and the relation established between the property of strongly monoH\"older and nowhere differentiability in Subsection~\ref{fracder}, we obtain the following result.

	\begin{theorem}\label{thm:fracalg}
		Let $\alpha \in (0,1)$.
		There exists an open interval $I \subseteq [0,1]$ such that the set of functions that are nowhere fractional differentiable on $I$ for any order in $(\alpha,1)$ and fractional differentiable on $I$ for any order in $(0,\alpha)$ is strongly $\mathfrak c$-algebrable.
	\end{theorem}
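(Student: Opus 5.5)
The plan is to reuse the function $F$ of Lemma~\ref{lemma_constr} and the exponential-like machinery of Proposition~\ref{prop_explike}, now applied to functions considered on a single open interval $I\subseteq[0,1]$. The interval must be fixed before the algebra is built. Fix a Hamel basis $\mathcal H$ of $\R$ and take the generators $\exp\circ(rF)$, $r\in\mathcal H$. By Proposition~\ref{prop_explike}, every nonzero element of the generated algebra has the form $g\circ F$ with $g$ exponential-like, and the generators are free. Since $F$ is nowhere locally constant on any interval (its level sets have Hausdorff dimension zero), freeness survives restriction to $I$. So it suffices to find one $I$ such that every $g\circ F|_I$ has the two fractional properties.

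\textbf{Step 1 (differentiability of order $q<\alpha$).} By \eqref{eq:holdercompo}, $g\circ F\in C^\alpha[0,1]$, so $g\circ F|_I\in C^\alpha(\overline I)$. The Kolwankar--Gangal result recalled in Subsection~\ref{fracder} then gives fractional differentiability on $I$ of every order $q\in(0,\alpha)$. This holds for every open $I$ and every $g$.

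\textbf{Step 2 (nowhere differentiability of order $q>\alpha$).} The local fractional derivative $\mathbb D^q f(a)$ only involves values of $f$ on $[a,x]$ as $x\to a$, so it is a local property. Let $a\in I$ with $g'(F(a))\neq0$. By continuity there is a neighbourhood $J\ni a$ with $\inf_J|g'\circ F|>0$. Then \eqref{eq:holdercompo} together with the strong monoH\"older property of $F$ on $J$ (inherited from $[0,1]$, with the same constant $c$) shows that $g\circ F$ satisfies condition (ii) of Proposition~\ref{prop_dimgraph} on $J$. The Kolwankar--Gangal theorem then yields non-differentiability of every order $q\in(\alpha,1)$ at $a$.

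\textbf{Step 3 (main obstacle: the critical set).} The remaining points are those of $E_g=\{a\in I: g'(F(a))=0\}$. This set is a finite union of level sets of $F$, by Lemma~\ref{lemma_explike} applied to $g'$. Near such a point one only has $|g\circ F(x)-g\circ F(a)|\asymp|F(x)-F(a)|^{m+1}$, where $m\ge1$ is the vanishing order of $g'$. This gives the lower oscillation $\delta^{(m+1)\alpha}$ instead of $\delta^\alpha$, so the Kolwankar--Gangal criterion no longer applies directly. Nor can $I$ be chosen so that $F(I)$ avoids the zeros of all $g'$ at once, since the algebra contains $g$ with critical values anywhere in $F(I)$.

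\begin{itemize}
\item First attempt: handle $E_g$ directly. Use the explicit lacunary Schauder structure of $F$ near a point of a level set to show that the Riemann--Liouville quotient $\frac{d^q(g\circ F(x)-g\circ F(a))}{[d(x-a)]^q}$ still oscillates. The dyadic blocks at scales $2^{-j_n}$ produce oscillations of size $\gtrsim 2^{-\alpha j_n}$ in $F(x)-F(a)$, which should still blow up after composition, at least on a subsequence of scales.
\item Fallback: modify $F$ so that its restriction to a fixed $I$ satisfies a two-sided estimate at every point, making the composed increments of exact order $\delta^\alpha$ at each point even where $g'\circ F$ vanishes.
\end{itemize}

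I expect this step to be the crux. If neither works, the statement should be read with $I$ depending on $g$, namely any interval avoiding $E_g$, as in the proof of Theorem~\ref{thm:alg}.
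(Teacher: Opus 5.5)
\textbf{Comparison with the paper.} Your Steps 1 and 2 are what the paper does. The paper gives no separate proof of Theorem~\ref{thm:fracalg}. It states it as a consequence of the proof of Theorem~\ref{thm:alg}: for each exponential-like $g$ one picks an interval $I\subseteq[0,1]\setminus E$ with $\inf_I|g'\circ F|>0$. By \eqref{eq:holdercompo}, $g\circ F$ is then in $C^\alpha[0,1]$ and strongly monoH\"older of exponent $\alpha$ on $I$, and the two Kolwankar--Gangal facts are applied on $I$. That $I$ depends on $g$. So the paper's argument proves exactly your fallback reading, in which each nonzero element of the algebra has its own interval. Under that reading your Steps 1--2, with $I$ any interval avoiding $E_g$, form a complete proof, identical in substance to the paper's. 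Your Step 3 diagnosis is correct: along this route no single interval serves the whole algebra, and the paper does not address this.

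\textbf{The gap for a fixed interval.} As a proof of the statement with $I$ fixed before the algebra, your proposal has a genuine gap at Step 3, and neither repair can close it. The obstruction is structural. If $f$ lies in an algebra of $C^\alpha$ functions and $a\in I$, then $h=f^2-2f(a)f$ lies in the same algebra and $h(x)-h(a)=(f(x)-f(a))^2$. Hence $|h(x)-h(a)|\le\|f\|_{C^\alpha}^2|x-a|^{2\alpha}$. In your setting, $f=e^{rF}$ gives $h=g\circ F$ with $g(y)=e^{2ry}-2e^{rF(a)}e^{ry}$; this is the same kind of trick the paper uses to show that $\{h_f\equiv\alpha\}$ is not $1$-algebrable.

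\emph{Your fallback is impossible.} Wherever $g'\circ F$ vanishes, $|g\circ F(x)-g\circ F(a)|\le\tfrac12\sup_{F([0,1])}|g''|\,|F(x)-F(a)|^2=O(\delta^{2\alpha})$ for \emph{every} $F\in C^\alpha[0,1]$. So no modification of $F$ yields increments of order $\delta^\alpha$ at such points.

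\emph{Your first attempt fails for the same reason.} The lacunary oscillations of $F$ get squared, not amplified. For $q\in(\alpha,\min\{1,2\alpha\})$ one has $h(x)-h(a)=o(|x-a|^q)$, which is exactly the behaviour compatible with $\mathbb D^q h(a)=0$ in the local fractional Taylor expansion. Condition (ii) of Proposition~\ref{prop_dimgraph} fails at $a$, so no oscillation argument at $a$ can yield non-differentiability.

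Proving the literal statement would need a different mechanism. One option would be to show directly that the Riemann--Liouville quotients $\frac{d^q(h(x)-h(a))}{[d(x-a)]^q}$ fail to exist or converge as $x\to a$ because $h$ is rough at nearby points. Neither you nor the paper supplies such an argument, and it is not clear that the statement read this way is even true.
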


%%%%%%%%%%%%%%%%%%%%%%%%%%%%%%%%%%%%%%%%%%%%	

\end{document}